\DeclareMathOperator{\ord}{ord}
\DeclareMathOperator{\PrePer}{PrePer}
\newtheorem{Theorem}{Theorem}[section]
\newtheorem{corollary}[Theorem]{Corollary}
\newtheorem{Proposition}[Theorem]{Proposition}
\newtheorem{Lemma}[Theorem]{Lemma}
\theoremstyle{definition}
\newtheorem*{Definition}{Definition}
\newtheorem{Example}[Theorem]{Example}
\newtheorem{Remark}[Theorem]{Remark}
\numberwithin{equation}{section}
\begin{document}

\baselineskip=17pt

\title{Heights of points with bounded ramification}
\author{Lukas Pottmeyer}
\date{\today}

\maketitle

\renewcommand{\thefootnote}{}

\footnote{2010 \emph{Mathematics Subject Classification}: Primary 11G50; Secondary 37P30, 14H52.}

\renewcommand{\thefootnote}{\arabic{footnote}}
\setcounter{footnote}{0}

\selectlanguage{english}
\begin{abstract}
Let $E$ be an elliptic curve defined over a number field $K$ with fixed non-archimedean absolute value $v$ of split-multiplicative reduction, and let $f$ be an associated Latt\`es map. Baker proved in \cite{Ba03} that the N\'eron-Tate height on $E$ is either zero or bounded from below by a positive constant, for all points of bounded ramification over $v$. In this paper we make this bound effective and prove an analogue result for the canonical height associated to $f$. We also study variations of this result by changing the reduction type of $E$ at $v$. This will lead to examples of fields $F$ such that the N\'eron-Tate height on non-torsion points in $E(F)$ is bounded from below by a positive constant and the height associated to $f$ gets arbitrarily small on $F$.
\end{abstract}

\begin{section}{Introduction}\label{Introduction}

By Kronecker's Theorem the standard logarithmic height $h$ of an algebraic number vanishes precisely at zero and roots of unity. The height $h$ is a non-negative function and so one can ask whether the height of non roots of unity can get arbitrarily small in the set of algebraic numbers. The classical example to see that this is the case is the sequence $\{2^{\nicefrac{1}{n}}\}_{n\in\mathbb{N}}$, because we have $h(2^{\nicefrac{1}{n}})=\frac{1}{n}\log{2}$. Northcott's famous theorem from 1950 (see \cite{No50}) implies that an upper bound on the degree of an algebraic non root of unity yields a lower bound for its height. An interesting question is which other properties of an algebraic number yield a lower bound of its height. 
Of course, one can ask the same question for all kinds of height functions. If $E$ is an elliptic curve defined over the algebraic numbers, then we denote by $\widehat{h}_E$ the N\'eron-Tate height on $E$. Moreover, for a rational function $f\in \overline{\mathbb{Q}}(x)$ of degree at least $2$, we denote by $\widehat{h}_f$ the canonical height associated to $f$. A point is called preperiodic if its forward orbit is finite. The set of all preperiodic points of $f$ will be denoted by $\PrePer(f)$. We will adopt a notation from Bombieri and Zannier (see \cite{BZ01}) to these heights.

\begin{Definition}\rm
Let $L$ be a subfield of $\overline{\mathbb{Q}}$ and $f\in\overline{\mathbb{Q}}(x)$ with $\deg(f)\geq2$. We say $L$ \textit{has the Bogomolov property relative to} $\widehat{h}_f$ if and only if there exists a constant $c>0$ such that $\widehat{h}_f (\alpha)\geq c$ for all $\alpha \in L\setminus \PrePer(f)$.

If $E$ is an elliptic curve defined over $\overline{\mathbb{Q}}$, we have a similar definition. We say $L$ \textit{has the Bogomolov property relative to} $\widehat{h}_E$ if and only if there exists a constant $c>0$ such that $\widehat{h}_E (P)\geq c$ for all $P \in E(L)\setminus E_{\rm tor}$.
\end{Definition}

As we have $\widehat{h}_{x^2} =h$, the case $f=x^2$ yields the definition of Bombieri and Zannier.
Let $\mathbb{Q}^{tr}$ be the maximal totally real subfield of $\overline{\mathbb{Q}}$. Furthermore, let $K$ be a number field with non-archimedean place $v\in M_K$. Denote by $K^{ab}$ the maximal abelian field extension of $K$, and by $K^{nr,v}$ the maximal algebraic extension of $K$ which is unramified above $v$. Now we can summarize the known examples of fields with the Bogomolov property relative to $\widehat{h}_E$ in Table \ref{Bogomolov elliptic}. Of course, such examples may depend on the choice of the elliptic curve $E$.

\begin{table}[ht]
	\centering
		\begin{tabular}{| c | c | c |}
		 \hline
			Field & Restrictions & Reference \\ \hline\hline
			finite extensions of $\mathbb{Q}^{tr}$ & none & Zhang \cite{Zh98} \\ \hline
			$K^{ab}$ & $E / K$ & Baker, Silverman \cite{Ba03},\cite{Si04} \\ \hline
			totally $p$-adic of any type & $p\geq 3$ & Baker, Petsche \cite{BP05} \\ \hline
			finite extensions of $K^{nr,v}$ & $E / K_v$ a Tate curve & Baker \cite{Ba03}\\ \hline
			$\mathbb{Q}(E_{\rm tor})$ & $E/\mathbb{Q}$ & Habegger \cite{Ha11} \\ \hline
		\end{tabular}
	\caption{Fields with the Bogomolov property relative to $\widehat{h}_E$}
	\label{Bogomolov elliptic}
\end{table}

For the definition of a totally $p$-adic field we refer to Section \ref{Heights and Lattes maps}. The result of Baker is not stated explicitly, it can be found in \cite{Ba03}, Section 5, Case 1. Notice that the result of Baker and Petsche is effective. In their paper they also prove an effective lower bound for $\widehat{h}_E$ on $\mathbb{Q}^{tr}$, whenever $E$ is defined over $\mathbb{Q}^{tr}$. The results of Baker (see \cite{Gu07}), Zhang, and Baker and Silverman (see \cite{BS04}) are also true in the setting of abelian varieties. The generalization of a Tate curve in case of finite extensions of $K^{nr,v}$ is an abelian variety which is totally degenerate at $v$. In this paper we will prove an effective lower bound for Baker's result.

For an elliptic curve $E$, defined over a field $K$ with non-archimedean absolute value $v$, and any $e\in \mathbb{N}$ denote by $M_{e}^{E}(v)$ the set of points $P$ such that the ramification index $e_{w\mid v}$ is bounded by $e$ for all $w\mid v$ in $M_{K(P)}$.

\begin{Theorem}
Let $E$ be an elliptic curve defined over a number field $K$ with split-multiplicative reduction at a finite place $v$ on $K$. Then there are effective computable constants $c'$, $c'_T >0$, depending on the degree of $K$, $e$, $v$ and the $j$-invariant of $E$, such that $\widehat{h}_E (P) \geq c'$ for all $P \in M_{e}^{E}(v) \setminus E_{\rm tor}$, and such that there are less than $c'_T$ torsion points in $M_{e}^{E}(v)$.
\end{Theorem}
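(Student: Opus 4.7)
The strategy I would adopt is to adapt Baker's argument from \cite{Ba03} to the regime of bounded ramification and to extract explicit constants at each step. Since $E$ has split-multiplicative reduction at $v$, there is a Tate parameter $q \in K_v^*$ with $N := \ord_v(q) > 0$ and a Galois-equivariant analytic isomorphism $\phi\colon \overline{K_v}^\times/q^{\mathbb Z} \xrightarrow{\sim} E(\overline{K_v})$. For $P \in M_e^E(v) \setminus E_{\rm tor}$, I fix a preimage $u \in \overline{K_v}^\times$, and for each place $w$ of $K(P)$ above $v$ record the normalised parameter $t_w := \{\ord_w(u_w)/\ord_w(q)\} \in [0,1)$ of the corresponding conjugate. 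The ramification bound $e_{w \mid v} \le e$ forces $t_w \in \tfrac{1}{eN}\mathbb{Z}/\mathbb{Z}$, so $t_w$ is either $0$ or at least $1/(eN)$.

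I would then exploit the standard formula for the N\'eron local function on a Tate curve at $w \mid v$, namely
\[
\lambda_w(P) = \tfrac{1}{2}\, B_2(t_w)\, \log|q|_w^{-1} + (\text{explicitly bounded correction in } u_w),
\]
where the correction is controlled by a convergent series in $q^n u_w^{\pm 1}$ and by $-\log|1 - u_w|_w$. Writing $\widehat h_E(P)$ as a sum of local contributions and separating the places above $v$ splits the argument into two cases. If some $t_w \neq 0$, then $t_w \ge 1/(eN)$, and an explicit quadratic lower bound on $B_2(t_w)$ combines with bounds at the remaining places (depending effectively on $j(E)$, $v$, and $[K:\mathbb{Q}]$) to yield $\widehat h_E(P) \ge c'$. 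If instead $t_w = 0$ for every $w \mid v$, then each $u_w$ is a local unit; modulo $q^{\mathbb Z}$, the element $u$ thus lies in an extension of $K$ of bounded ramification above $v$. The Bogomolov property for such fields, in the spirit of Bombieri--Zannier \cite{BZ01}, yields either $h(u) \ge c'' > 0$ effectively (which via the height comparison again produces $\widehat h_E(P) \ge c'$) or else $u$ is a root of unity, in which case $P = \phi(u) \in E_{\rm tor}$, contradicting the hypothesis.

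For the torsion count, torsion elements in $M_e^E(v)$ pull back under $\phi$ to torsion classes in $\overline{K_v}^\times/q^{\mathbb Z}$, i.e., to classes of $\zeta q^{a/b}$ with $\zeta$ a root of unity and $a, b \in \mathbb{Z}$, $b \ge 1$. Bounded ramification forces $b \le eN$ and restricts $\zeta$ to a cyclotomic extension whose local ramification at $v$ is bounded, so the number of such classes is effectively bounded by $c'_T$. The main obstacle will be the first case of the height estimate: since $B_2$ takes negative values, a bound at a single place $w$ cannot suffice, and one must balance the $v$-adic contribution against the local heights at the bad primes of $E$ outside $v$. Making this step effective --- rather than merely qualitative as in Baker --- requires explicit control of the N\'eron local functions in terms of the $j$-invariant and the conductor of $E$, which is the most delicate bookkeeping in the argument.
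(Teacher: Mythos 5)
Your proposal follows Baker's original Tate-parametrization route, but two of its three pillars have genuine gaps. The most serious is your Case 2 (all $t_w=0$): there is no ``Bogomolov property in the spirit of Bombieri--Zannier'' available for fields that are merely of bounded ramification above $v$. Bombieri--Zannier requires bounded \emph{local degree} (ramification \emph{and} residue degree); with ramification alone the multiplicative analogue is simply false --- for instance $(p')^{1/n}$ with $p\nmid np'$ is unramified above $v\mid p$ and has height $\tfrac{1}{n}\log p'\to 0$. So when the local parameter $u$ is a unit at every $w\mid v$ you cannot conclude $h(u)\geq c''$ or $u$ a root of unity; indeed the whole interest of Baker's theorem is that the elliptic statement survives where the $\mathbb{G}_m$-statement fails. (There is also a local/global mismatch: $u$ lives in $\overline{K_v}^\times/q^{\mathbb{Z}}$, and $h(u)$ does not compare directly to $\widehat h_E(P)$, whose other local terms have nothing to do with $u$.) Your Case 1 is also not closeable as stated: $B_2(t_w)$ is negative on a middle range of $t_w$, and for a \emph{single} point the local heights away from $v$ are only bounded below by constants, so there is nothing to ``balance'' --- positivity can only be extracted after averaging over many points (pairwise differences of multiples), which your sketch omits. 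Finally, the torsion count is flawed for the same local-versus-global reason: roots of unity of order prime to $p$ generate extensions that are \emph{unramified} above $v$, so bounded ramification at one place does not bound the order of $\zeta$ in $\zeta q^{a/b}$; finiteness of torsion in $M_e^E(v)$ is a statement about \emph{all} conjugates (all $w\mid v$) and needs a global argument.

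The paper avoids all three problems by a different mechanism (an idea of S.~David): for $P\in M_e^E(v)$ one passes to $Q:=e!\ord_v(j^{-1})P$, which lies in $E_0(K(P)_w)$ for every $w\mid v$ because the component group there has order $e_{w\mid v}\ord_v(j^{-1})$; on $E_0$ the explicit formula gives the uniform positive bound $\lambda_w(Q)\geq\tfrac{1}{12}v(j^{-1})$, so the sign problem with $B_2$ never arises. One then sums $\widehat h_E(R-R')$ over distinct pairs in $\Lambda_s=\{iQ:\ i\leq s\}$, controlling archimedean places by the Elkies/Baker--Petsche bound and bad non-archimedean places by Hindry--Silverman; the places above $v$ contribute a positive $s^2$-term that dominates the $s\log s$ losses. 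Comparing with the exact value $(\tfrac16 s^4-\tfrac16 s^2)\widehat h_E(Q)$ simultaneously bounds the order of torsion points in $M_e^E(v)$ (take $P$ torsion, left side zero, contradiction for $s=\mathfrak c$) and yields the effective lower bound after optimizing $s$ via the Lambert-$W$ lemma. If you want to salvage your approach, the averaging over $\Lambda_s$ (or an equidistribution/discrepancy substitute) is exactly the missing ingredient, and the unit case must be handled inside that averaged local-height framework rather than by any height bound on $\mathbb{G}_m$.
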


Northcott's motivation for his theorem was to prove that an endomorphism of an algebraic variety has only finitely many preperiodic points of bounded degree. Here the degree of a point is the degree of the smallest number field over which the point is defined. A strong relation between elliptic curves and dynamical systems in dimension one is given by Latt\`es maps. Let $f$ be a Latt\`es map associated to the elliptic curve $E$.
It is easy to see that the Bogomolov property of a field $F$ relative to $\widehat{h}_f$ implies the Bogomolov property of $F$ relative to $\widehat{h}_E$. The converse, however, is not true in general as we will see in Example \ref{counterexample elliptic}. This example shows also that the Bogomolov property relative to $\widehat{h}_E$ is not preserved under finite field extensions, a fact that is also known for the standard logarithmic height $h$. (The only known counterexample is the field extension $\mathbb{Q}^{tr}(i)/\mathbb{Q}^{tr}$. See \cite{AN07} for the first proof of this, \cite{ADZ11} for a very short proof and \cite{Po13} for a proof using dynamical heights). Hence, it is worth studying fields with the Bogomolov property relative to $\widehat{h}_f$. In Proposition \ref{twisting} we present a condition when a lower bound for $\widehat{h}_E$ can be transfered to a lower bound for $\widehat{h}_f$. In complete analogy to the definition of $M_{e}^{E}(v)$ above we define $M_{e}(v)$ as the set of algebraic numbers $\alpha$ such that the ramification index $e_{w\mid v}$ is bounded by $e$ for all $w\mid v$ in $M_{K(\alpha)}$.

\begin{Theorem}
Let $E$ be an elliptic curve defined over a number field $K$, with split-multiplicative reduction at a finite place $v$ on $K$. Further let $f$ be a Latt\`es map associated to $E$. Then there are effective computable constants $c$, $c_P >0$, depending on the degree of $K$, $e$, $v$ and the $j$-invariant of $E$, such that $\widehat{h}_f (\alpha) \geq c$ for all $\alpha \in M_{e}(v) \setminus \PrePer (f)$, and such that there are less than $c_P $ preperiodic points of $f$ in $M_{e}(v)$.
\end{Theorem}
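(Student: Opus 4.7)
The plan is to deduce this result from the preceding theorem by exploiting the defining covering relation of a Lattès map. Recall that $f$ fits into a commutative diagram $\pi \circ \phi = f \circ \pi$, where $\pi : E \to \mathbb{P}^1$ is a finite morphism of some degree $d$ and $\phi$ is an isogeny of $E$, possibly composed with a translation. This yields a height relation $\widehat{h}_f(\pi(P)) = c_0 \, \widehat{h}_E(P)$ with an explicit positive constant $c_0$, together with the identification $\PrePer(f) = \pi(S)$ for some torsion coset $S \subset E(\overline{\mathbb{Q}})$. Proposition \ref{twisting} codifies precisely this transfer principle.

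Given $\alpha \in M_e(v) \setminus \PrePer(f)$, I would lift $\alpha$ to some $P \in E(\overline{\mathbb{Q}})$ with $\pi(P) = \alpha$. The inequality $[K(P):K(\alpha)] \leq d$ combined with the tower formula for ramification indices forces $P \in M_{de}^E(v)$, and $P \notin E_{\rm tor}$ since $\alpha \notin \PrePer(f)$. Applying the preceding theorem with parameter $de$ produces an effective positive constant $c'$ with $\widehat{h}_E(P) \geq c'$, and hence
\begin{equation*}
\widehat{h}_f(\alpha) \;=\; c_0 \, \widehat{h}_E(P) \;\geq\; c_0 c' \;=:\; c.
\end{equation*}
The preperiodic count is analogous: every $\alpha \in \PrePer(f) \cap M_e(v)$ has at most $d$ preimages in $S \cap M_{de}^E(v)$, and since $S$ is a translate of a subgroup of $E_{\rm tor}$, the torsion bound from the preceding theorem yields $c_P = d \cdot c'_T$.

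The main obstacle is ensuring that the constant $c_0$ in the height comparison is effective and depends only on the degree of $K$, on $v$, $e$ and $j(E)$, since $c_0$ absorbs the local discrepancies between the canonical heights on $E$ and on $\mathbb{P}^1$ and must be controlled uniformly over all Weierstrass models giving rise to $f$. This is precisely the content that Proposition \ref{twisting} must supply; once the comparison and the description of $\PrePer(f)$ via $S$ are in place, the remainder of the argument is a clean bookkeeping reduction to the elliptic case, with no new Diophantine input.
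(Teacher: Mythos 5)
Your proposal is correct and follows essentially the same route as the paper: lift $\alpha\in M_e(v)$ to $P\in\pi^{-1}(\alpha)$, use $[K(P):K(\alpha)]\leq\deg\pi$ and multiplicativity of ramification indices to place $P$ in a set of bounded ramification over $v$, apply Theorem \ref{ellipticversion} to $P$, and transfer via $\widehat{h}_f\circ\pi=\deg(\pi)\,\widehat{h}_E$, counting preperiodic points through torsion points in the fibers. Two small corrections: the transfer identity you need is Lemma \ref{heightrelation} (with $c_0=\deg\pi$ exactly), not Proposition \ref{twisting} --- no twisting or uniformity over Weierstrass models is required, because Theorem \ref{ellipticversion} bounds $\widehat{h}_E$ on \emph{all} points of $M_{\deg(\pi)e}^{E}(v)$ regardless of their field of definition, so the ``main obstacle'' you flag does not arise; and your preperiodic count should come from the fact that each such $\alpha$ has \emph{at least one} torsion preimage in $M_{\deg(\pi)e}^{E}(v)$ with distinct $\alpha$'s having disjoint fibers (so $c'_T$ itself already suffices). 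The paper additionally observes that split-multiplicative reduction forces $j\neq 0,1728$, hence $\deg\pi=2$, and reruns the elliptic argument with $2\cdot e!$ in place of $e!$ (rather than invoking the theorem with parameter $2e$, which would give $(2e)!$), together with the $2$-to-$1$ structure of $\pi$ away from its four critical values; this yields the sharper explicit constants of Theorem \ref{lattesversion}, whereas your blunter reduction still proves the qualitative statement with effective constants.
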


The precise formulation of these results can be found in Theorems \ref{ellipticversion} and \ref{lattesversion}. The paper is organized as follows: In Section \ref{Heights and Lattes maps} we will give a brief introduction to Latt\`es maps and their associated canonical height and we will compare these heights with the N\'eron-Tate height on elliptic curves. In Section \ref{Helpful calculations} we will prove three lemmas which are needed for the proof of our main results. These proofs are given in Section \ref{Main results}. In the last section of this paper we study the behavior of $\widehat{h}_E$ and $\widehat{h}_f$ on $M_{e}^{E}(v)$, resp. $M_e (v)$, when $E$ has any reduction type at $v$.

\vspace{0.2cm}
  
\textit{Acknowledgment:} The author would like to thank Sinnou David for sharing his idea to prove Theorem \ref{ellipticversion} and Sara Checcoli, Walter Gubler, Joseph Silverman, Emmanuel Ullmo and the anonymous referee for helpful comments and suggestions.

In the published version of this article there is an error in the proof of the former Lemma 5.8. This was noticed by Francesco Amoroso and Lea Terracini produced a counterexample to prove that the whole statement was false. I thank both of them for the chance to remove this long standing error.

\end{section}

\begin{section}{Heights and Latt\`es maps}\label{Heights and Lattes maps}

In 1993 Call and Silverman introduced a canonical dynamical height function related to rational maps over $\overline{\mathbb{Q}}$. A very brief introduction to these heights can be given by the following Theorem, which we will treat as a definition of the canonical height.

\begin{Theorem}\label{canonical height}
Let $f\in\overline{\mathbb{Q}}(x)$ be a rational map of degree greater than one. There is a unique height function $\widehat{h}_f$, called the canonical height related to $f$, satisfying
 $$\begin{array}{lcr}
 i) \hspace{0.2cm} \widehat{h}_f (f(\alpha))=\deg(f)\widehat{h}_f (\alpha) & \hspace{0.4cm} \text{ and } \hspace{0.4cm} & ii) \hspace{0.2cm} \widehat{h}_f = h + O(1)
 \end{array}$$
 for all $\alpha \in \overline{\mathbb{Q}}\cup\{\infty\}$, where we set $h(\infty)=0$.
 \end{Theorem}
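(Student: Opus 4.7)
The plan is to construct $\widehat{h}_f$ via the classical Tate telescoping limit
$$\widehat{h}_f(\alpha)=\lim_{n\to\infty}\frac{h(f^n(\alpha))}{\deg(f)^n},$$
then to verify (i) and (ii) directly from this definition and to establish uniqueness by iterating (i). The essential analytic input is the elementary functoriality estimate $h(f(\beta))=\deg(f)\,h(\beta)+O(1)$ valid on all of $\mathbb{P}^1(\overline{\mathbb{Q}})$, with the implied constant depending only on $f$.

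Granting this estimate, fix $C>0$ with $|h(f(\beta))-\deg(f)\,h(\beta)|\le C$ for every $\beta$. Applying it with $\beta=f^n(\alpha)$ and dividing by $\deg(f)^{n+1}$ shows that consecutive terms of the sequence differ by at most $C/\deg(f)^{n+1}$, so the sequence is Cauchy and the limit exists. Summing the resulting geometric series yields the global bound $|\widehat{h}_f(\alpha)-h(\alpha)|\le C/(\deg(f)-1)$, which is exactly property (ii). Property (i) follows from a one-step index shift in the defining limit. For uniqueness, suppose $g$ also satisfies (i) and (ii). Then $\Delta=g-\widehat{h}_f$ is bounded and satisfies $\Delta(f(\alpha))=\deg(f)\,\Delta(\alpha)$; iterating $n$ times and dividing by $\deg(f)^n$ forces $|\Delta(\alpha)|\le\|\Delta\|_\infty/\deg(f)^n\to 0$, so $g=\widehat{h}_f$.

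The main obstacle is the functoriality bound $h\circ f=\deg(f)\,h+O(1)$ itself. Choosing a homogeneous polynomial lift $F\colon\mathbb{A}^2\to\mathbb{A}^2$ of $f$ of bidegree $\deg(f)$, the upper bound $h(f(\beta))\le \deg(f)\,h(\beta)+O(1)$ is a direct local estimate on the coefficients of $F$, while the matching lower bound requires controlling possible cancellations between the two components of $F$ at a point, typically via an explicit resultant or effective Nullstellensatz inequality encoding the fact that $F$ has no common projective zero off the origin. Once this ingredient is in place, the rest of the construction is the routine Tate limit argument outlined above.
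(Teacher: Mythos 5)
Your construction is correct and coincides with the standard Call--Silverman/Tate argument that the paper itself invokes by citation: the paper gives no proof of this theorem but refers to \cite{Si07}, Chapter 3.4, where exactly your telescoping-limit definition, the bound $\vert\widehat{h}_f - h\vert \le C/(\deg(f)-1)$, and the boundedness-plus-functional-equation uniqueness argument appear, with the functoriality estimate $h\circ f = \deg(f)\,h + O(1)$ supplied by the resultant/Nullstellensatz argument you indicate. So your proposal is essentially the same approach as the paper's (cited) proof, and it is complete modulo that standard ingredient, which you correctly identify and sketch.
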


For a proof of Theorem \ref{canonical height} and additional information on these heights we refer to \cite{Si07}, Chapter 3.4. In this paper we will work with a special class of rational functions.

\begin{Definition}
Let $K$ be a field with characteristic different to $2$ and $3$, and $E$ an elliptic curve over $K$ with given endomorphism $\Psi\neq [0]$ of degree greater than one. Consider a finite covering $\pi : E \rightarrow \mathbb{P}^{1}_{K}$. A map $f$ is called \emph{Latt\`es map associated to $E$} if the diagram 
\begin{align}
\begin{xy}
  \xymatrix{
      E \ar[r]^{\Psi} \ar[d]_\pi    &   E \ar[d]^\pi  \\
      \mathbb{P}_{K}^{1} \ar[r]^f             &   \mathbb{P}_{K}^{1}   
  }
\end{xy} \label{Lattes} 
\end{align}
commutes. If it is necessary to be more precise, we call such a Latt\`es map \emph{associated to $E$, $\pi$ and $\Psi$}.
\end{Definition}

We talk of a Latt\`es map over a field $K$ if it is associated to an elliptic curve over $K$. Let $E$ be given in (dehomogenized) Weierstrass equation with an diagram \eqref{Lattes}, then up to composition with an isogeny $\pi$ is one of the following maps (see \cite{Si07}, Proposition 6.37 and Theorem 6.57). Note that in cases where the $j$-invariant of $E$ is $1728$ or $0$ there are more then one admissible choices for $\pi$.

\begin{equation*}
\pi (x,y)= 
\begin{cases} x & \text{in any case}
\\
x^2 &\text{if $j_E =1728$}
\\
x^3 &\text{if $j_E =0$}
\\
y^2 &\text{if $j_E =0$}
\end{cases}
\end{equation*}

We have the following well known correspondence between the N\'eron-Tate height of an elliptic curve and the canonical height associated to a Latt\`es map. 

\begin{Lemma}\label{heightrelation}
Let $K$ be a subfield of $\overline{\mathbb{Q}}$, $E$ an elliptic curve over $K$ and $f$ a Latt\`es map associated to $E$ with diagram \eqref{Lattes}. Then we have $$\widehat{h}_f \circ \pi = \deg (\pi) \widehat{h}_E \quad ,$$
where $\widehat{h}_f$ is the canonical height to $f$ and $\widehat{h}_E$ is the canonical height on $E$.
\end{Lemma}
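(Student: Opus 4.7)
The plan is to use Tate's limiting argument: define the difference $D(P) := \widehat{h}_f(\pi(P)) - \deg(\pi)\,\widehat{h}_E(P)$ for $P \in E(\overline{\mathbb{Q}})$, show that $D$ is bounded, and show that it scales by a factor $>1$ under an endomorphism. Both facts together force $D \equiv 0$.

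First I would extract the degree identity $\deg(f) = \deg(\Psi)$. Comparing degrees in the commutative diagram \eqref{Lattes} gives $\deg(\pi)\deg(\Psi) = \deg(f)\deg(\pi)$, hence $\deg(f)=\deg(\Psi)$. Next I would check that $D$ is bounded. The standard comparison between the naive Weil height on $\mathbb{P}^1$ and the canonical height on $E$ (see \cite{Si07}, Chapter~3) yields $h(\pi(P)) = \deg(\pi)\,\widehat{h}_E(P) + O(1)$, since $\pi$ corresponds to a degree-$\deg(\pi)$ morphism to $\mathbb{P}^1$ and the N\'eron--Tate height is normalized so that $\widehat{h}_E$ equals $\tfrac{1}{2}h\circ x + O(1)$. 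Combining with property $ii)$ of Theorem~\ref{canonical height}, namely $\widehat{h}_f = h + O(1)$, gives $D = O(1)$.

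The scaling property is then a direct computation using the commutative diagram and the two functional equations: for any $P \in E(\overline{\mathbb{Q}})$ one has
\begin{align*}
D(\Psi(P)) &= \widehat{h}_f(\pi(\Psi(P))) - \deg(\pi)\,\widehat{h}_E(\Psi(P)) \\
&= \widehat{h}_f(f(\pi(P))) - \deg(\pi)\deg(\Psi)\,\widehat{h}_E(P) \\
&= \deg(f)\,\widehat{h}_f(\pi(P)) - \deg(\pi)\deg(\Psi)\,\widehat{h}_E(P) \\
&= \deg(\Psi)\,D(P),
\end{align*}
where the third line uses $\widehat{h}_E\circ\Psi = \deg(\Psi)\,\widehat{h}_E$ (the standard behaviour of the N\'eron--Tate height under isogenies) and the fourth uses $\deg(f)=\deg(\Psi)$.

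Iterating yields $D(\Psi^n(P)) = \deg(\Psi)^n D(P)$. Since $\deg(\Psi) \geq 2$ by hypothesis and $D$ is globally bounded on $E(\overline{\mathbb{Q}})$, letting $n\to\infty$ forces $D(P)=0$, which is the desired identity. The only mildly delicate step is the first one, verifying that $h\circ\pi$ and $\deg(\pi)\,\widehat{h}_E$ agree up to $O(1)$ for the three possible choices of $\pi$ listed above; for $\pi=x$ this is classical, and for $\pi=x^2,x^3,y^2$ it follows by composing with the corresponding power map on $\mathbb{P}^1$ and using functoriality of the Weil height under morphisms, so no new ideas are required.
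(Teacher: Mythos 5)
Your proof is correct. The paper does not actually prove Lemma \ref{heightrelation} --- it is quoted as a well-known correspondence (cf.\ \cite{Si07}, where exactly this statement is proved) --- and your argument is the standard one for it: bound $D=\widehat{h}_f\circ\pi-\deg(\pi)\widehat{h}_E$ using $h\circ\pi=\deg(\pi)\widehat{h}_E+O(1)$ (checked for each admissible $\pi$) together with Theorem \ref{canonical height} ii), observe via the diagram \eqref{Lattes} and $\widehat{h}_E\circ\Psi=\deg(\Psi)\widehat{h}_E$ that $D\circ\Psi=\deg(\Psi)D$ with $\deg(\Psi)=\deg(f)\geq 2$, and let the bounded difference telescope to zero.
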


Let $E: y^2 = x^3 + Ax +B$ be an elliptic curve defined over a field $K$ of characteristic $\neq 2,3$ and let $\gamma \in \overline{K}^{*}$. The elliptic curve $$E_{\gamma} : y^2 = x^3 + \gamma ^{2} A x + \gamma ^{3}B$$
defined over $K(\gamma)$ is the \textit{twist of $E$ by $\gamma$}. Notice that this is a Weierstrass equation of the curve given by $ y^2 = \gamma(x^3 + Ax +B)$. If $A$, respectively $B$, is equal to zero, then $E_{\gamma}$ is defined over $K(\gamma^3)$, respectively $K(\gamma^2)$. $E$ and $E_{\gamma}$ are isomorphic over $\overline{K}$ and an isomorphism is given by
$$g_{\gamma}: E \tilde{\rightarrow} E_{\gamma} \hspace{0.3cm} ; \hspace{0.3cm} (x,y) \mapsto  (\gamma x,\gamma \sqrt{\gamma}y) \quad .$$
As $g_{\gamma}$ is an isomorphism, it commutes with multiplication by $m \in \mathbb{Z}$. This gives a simple relation between the canonical heights on $E$ and $E_{\gamma}$. For any $P\in E$ we have
\begin{align}
\widehat{h}_E (P) &= \frac{1}{2} \lim_{n\rightarrow\infty}\frac{1}{4^n}h(x([2]^n P)) \nonumber \\  &= \frac{1}{2} \lim_{n\rightarrow\infty}\frac{1}{4^n}h(\gamma^{-1} x([2]^n g_{\gamma}(P))) = \widehat{h}_{E_{\gamma}} (g_{\gamma}(P)). \label{heighttwist}
 \end{align}  

\begin{Proposition}\label{twisting}
Consider a field $K \subseteq \overline{\mathbb{Q}}$. Let $E$ be an elliptic curve defined over $K$ and $f$ a Latt\`es map related to the diagram \eqref{Lattes}. If there is a positive constant $c>0$ such that for every elliptic curve $E'$ defined over $K$, which is $\overline{K}$-isomorphic to $E$, $\widehat{h}_{E'} (P) \geq c$ is true for all $P\in E'(K)\setminus E'_{\rm tor}$, then we have $$\widehat{h}_f (\alpha) \geq \deg(\pi) c \text{ for all } \alpha \in K \setminus \PrePer(f) \quad .$$
In particular this relation is true if $c$ only depends on the $j$-invariant of $E$.
\end{Proposition}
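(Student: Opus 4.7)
The plan is to reduce the Latt\`es statement back to the elliptic hypothesis by lifting $\alpha$ to a point on a suitably chosen twist of $E$ that is defined over $K$. Fix $\alpha \in K \setminus \PrePer(f)$ and choose any preimage $Q = (\delta,\beta) \in \pi^{-1}(\alpha) \subseteq E(\overline{\mathbb{Q}})$. A standard property of Latt\`es maps says that $\alpha \notin \PrePer(f)$ is equivalent to $Q \notin E_{\rm tor}$, so $Q$ is a non-torsion point whose image under $\pi$ is $K$-rational. The goal is to produce $\gamma \in \overline{K}^{*}$ such that (i) the twist $E_\gamma$ is defined over $K$ and (ii) $g_\gamma(Q) = (\gamma\delta, \gamma^{3/2}\beta)$ lies in $E_\gamma(K)$. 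Since $g_\gamma$ is an $\overline{K}$-isomorphism it maps torsion to torsion, so $g_\gamma(Q) \notin E_{\gamma,\rm tor}$, and the hypothesis applied to $E_\gamma$ gives $\widehat{h}_{E_\gamma}(g_\gamma(Q)) \geq c$. Combining \eqref{heighttwist} with Lemma \ref{heightrelation} then yields
\[
\widehat{h}_f(\alpha) \;=\; \deg(\pi)\,\widehat{h}_E(Q) \;=\; \deg(\pi)\,\widehat{h}_{E_\gamma}(g_\gamma(Q)) \;\geq\; \deg(\pi)\,c.
\]

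Finding $\gamma$ proceeds by a short case analysis on the shape of $\pi$ recalled in the excerpt. In the generic case $\pi(x,y) = x$ one has $\delta = \alpha \in K$, so $\gamma := \beta^2 = \alpha^3 + A\alpha + B \in K^{*}$ works directly and gives $g_\gamma(Q) = (\gamma\alpha, \pm \gamma^2) \in E_\gamma(K)$; here $\gamma \neq 0$ precisely because $Q$ is not $2$-torsion. When $j_E = 1728$ and $\pi(x,y) = x^2$, or when $j_E = 0$ and $\pi(x,y) \in \{x^3, y^2\}$, the coordinate $\delta$ is no longer in $K$, and $\gamma$ must absorb its algebraic part. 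I set $\eta := \gamma\delta$ and impose simultaneously $\eta \in K$ and $\gamma^{3/2}\beta \in K$; since $\delta^2 \in K$ (respectively $\delta^3 \in K$), condition (i) is automatic once $\eta \in K$. Exploiting the special form $B=0$ (respectively $A=0$), the natural choices $\eta = \alpha/(\alpha+A)$ respectively $\eta = (\alpha-B)/\alpha$ make $(\gamma^{3/2}\beta)^2$ a square in $K$, producing $g_\gamma(Q) = (\eta, \pm\eta) \in E_\gamma(K)$. The degenerate cases $\eta \in \{0,\infty\}$ correspond exactly to $Q$ being $2$- or $3$-torsion, hence to $\alpha \in \PrePer(f)$, so the construction never fails on the set under consideration.

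The main obstacle is precisely this case analysis for $j_E \in \{0,1728\}$: the naive twist parameter that clears the $x$-coordinate does not simultaneously force the $y$-coordinate to be rational, and one really needs the extra automorphisms of the special curves to achieve (i) and (ii) at once. The symmetric outcome $g_\gamma(Q) = (\eta, \pm\eta)$ is essentially forced by this constraint, and verifying it is a routine check in the respective Weierstrass model. Everything else—the preperiodic/torsion dictionary, the invariance of the canonical height under the isomorphism $g_\gamma$, and Lemma \ref{heightrelation}—plugs in mechanically. The closing remark that the hypothesis needs $c$ to depend only on $j_E$ is then immediate, since every $\overline{K}$-twist of $E$ shares its $j$-invariant.
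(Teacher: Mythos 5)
Your proposal follows essentially the same route as the paper: lift $\alpha$ to a non-torsion point $Q$ with $\pi(Q)=\alpha$, twist by a suitable $\gamma$ so that $g_\gamma(Q)$ lies in $E_\gamma(K)$ with $E_\gamma$ defined over $K$, and conclude via Lemma \ref{heightrelation}, \eqref{heighttwist} and the hypothesis, with the same case analysis on the shape of $\pi$. The only blemish is that for $j_E=0$ your single formula $\eta=(\alpha-B)/\alpha$ handles $\pi(x,y)=y^2$ but not $\pi(x,y)=x^3$, where the uniform choice $\eta=x(Q)^3/y(Q)^2$ (here $\alpha/(\alpha+B)$) works instead --- a minor slip that your own method corrects immediately.
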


\begin{proof}
The strategy for the proof is the following. Take an arbitrary $\alpha \in K \setminus \PrePer (f)$ and a point $P\in E(\overline{K})$ with $\pi (P) =\alpha$. As $\alpha$ is non-preperiodic, we know that $P$ is not a torsion point. Twist $E$ by a suitable $\gamma$ such that $g_{\gamma}(P) \in E_{\gamma} (K)$ and $E_{\gamma}$ is defined over $K$, then use Lemma \ref{heightrelation}, \eqref{heighttwist} and our assumption to conclude $$\widehat{h}_f (\alpha) = \deg (\pi) \widehat{h}_E (P) = \deg (\pi) \widehat{h}_{E_{\gamma}} (g_{\gamma}(P))\geq \deg (\pi) c \quad .$$

In order to prove the existence of such a $\gamma$ we have to consider four different cases depending on the representation of $\pi$.
We will examine the case $\pi (x,y)=x^2$. Notice that this can only occur if $j_E = 1728$, so we can assume $E: y^2 = x^3 +Ax$. For fixed roots we have $P =(\sqrt{\alpha}, \sqrt[4]{\alpha}\sqrt{\alpha + A})$. Twisting by $\sqrt{\alpha}(\alpha^2 + \alpha A)$ yields $g_{\sqrt{\alpha}(\alpha^2 + \alpha A)}(P)=(\alpha^3 + \alpha^2 A, \alpha^2 + \alpha A) \in E_{\sqrt{\alpha}(\alpha^2 + \alpha A)}(K)$. The elliptic curve $E_{\sqrt{\alpha}(\alpha^2 + \alpha A)}$ is defined over $K$, since $B=0$ and $\sqrt{\alpha}(\alpha^2 + \alpha A)\in K^{\nicefrac{1}{2}}$.

The other cases follow similarly. \end{proof}

\begin{Definition}
Let $p$ be a rational prime and $e,\textsl{f} \in \mathbb{N}$. We call a subfield $L$ of $\overline{\mathbb{Q}}$ \emph{totally $p$-adic of type $(e,\textsl{f})$} if for every $\alpha \in L$ and all $w \in M_{\mathbb{Q}(\alpha)}$, with $w \mid p$, the ramification indices $e_{w\mid p}$ are bounded by $e$ and the residue degrees $\textsl{f}_{w \mid p}$ are bounded by $\textsl{f}$.
\end{Definition}

The next corollary follows immediately from Proposition \ref{twisting} and Theorem 6.3 in \cite{BP05}.

\begin{corollary}\label{bakerpetsche1}
Let $f$ be a Latt\`es map associated to an elliptic curve $E$ over $\mathbb{Q}^{tr}$, with $j$-invariant $j_E$. Then we have
$$\widehat{h}_f (\alpha) \geq \frac{1}{108 (h(j_{E})+10)^5} \text{ for all } \alpha \in \mathbb{Q}^{tr} \setminus \PrePer (f) \quad.$$ 
\end{corollary}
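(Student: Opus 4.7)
The plan is to apply Proposition \ref{twisting} directly, with the field $K = \mathbb{Q}^{tr}$ and the constant $c$ supplied by Theorem 6.3 of \cite{BP05}. First I would recall that Theorem 6.3 of Baker--Petsche provides an explicit positive lower bound for $\widehat{h}_{E'}(P)$ on $E'(\mathbb{Q}^{tr}) \setminus E'_{\rm tor}$, for any elliptic curve $E'/\mathbb{Q}^{tr}$, that depends only on the $j$-invariant $j_{E'}$; concretely, the bound should read $\widehat{h}_{E'}(P) \geq \frac{1}{108(h(j_{E'})+10)^5}$ (or a stronger bound of that shape which implies it). The crucial point is that this constant depends only on $j_{E'}$, not on the particular Weierstrass model.

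Next I would exploit the fact that any elliptic curve $E'/\mathbb{Q}^{tr}$ which is $\overline{\mathbb{Q}^{tr}}$-isomorphic to the given $E$ has the same $j$-invariant, namely $j_E$. Therefore the Baker--Petsche bound specializes, uniformly over all such twists $E'$, to
\[
\widehat{h}_{E'}(P) \;\geq\; \frac{1}{108(h(j_{E})+10)^5} \qquad \text{for all } P \in E'(\mathbb{Q}^{tr}) \setminus E'_{\rm tor}.
\]
This is exactly the hypothesis of Proposition \ref{twisting}, with $c = \frac{1}{108(h(j_{E})+10)^5}$, and in particular with $c$ depending only on $j_E$.

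Now I would invoke Proposition \ref{twisting} to transfer this lower bound to the dynamical height: for every $\alpha \in \mathbb{Q}^{tr} \setminus \PrePer(f)$,
\[
\widehat{h}_f(\alpha) \;\geq\; \deg(\pi)\cdot c \;\geq\; c \;=\; \frac{1}{108(h(j_E)+10)^5},
\]
using the trivial fact that $\deg(\pi) \geq 1$ in all four admissible choices of $\pi$ listed before. This yields the statement of the corollary.

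The only real content here is the applicability of Proposition \ref{twisting}; there is no genuine obstacle, since the Baker--Petsche bound is already of the correct isomorphism-invariant shape. The minor subtlety I would double-check is that $\mathbb{Q}^{tr}$ is closed under the square-roots (and cube-roots) needed inside the proof of Proposition \ref{twisting} to realize the twist $E_\gamma$ over $\mathbb{Q}^{tr}$; this follows because $\mathbb{Q}^{tr}$ is closed under taking real roots of real polynomials, which is exactly what the explicit twists constructed in the proof of Proposition \ref{twisting} require.
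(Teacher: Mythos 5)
Your route is the paper's route: the author derives this corollary in one line from Proposition \ref{twisting} together with Theorem 6.3 of \cite{BP05}, exactly as you do, using that every curve over $\mathbb{Q}^{tr}$ which is $\overline{K}$-isomorphic to $E$ has the same $j$-invariant, so the Baker--Petsche bound is uniform over all the twists that appear. The gap is in the constant bookkeeping, which here is the entire content of the statement. You quote Baker--Petsche as already giving $\widehat{h}_{E'}(P)\geq \frac{1}{108(h(j_{E'})+10)^5}$ ``or a stronger bound of that shape'' and then deliberately discard the twisting factor by invoking only $\deg(\pi)\geq 1$. But the constant supplied by Theorem 6.3 of \cite{BP05} is the weaker $\frac{1}{216(h(j_E)+10)^5}$; the stated $\frac{1}{108(h(j_E)+10)^5}$ is exactly twice that, and the factor of two is precisely the factor $\deg(\pi)$ from Proposition \ref{twisting}: for the admissible coverings $\pi$ one has $\deg(\pi)\in\{2,4,6\}$, never $1$, so $\deg(\pi)\geq 2$. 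This is the same bookkeeping the author performs in the proof of Corollary \ref{bakerpetsche2}, where the conclusion is $\widehat{h}_f(\alpha)\geq 2c(6e,6\textsl{f},p,j_E,K)$. As written, your chain of inequalities only establishes the bound with $216$ in place of $108$; the repair is immediate, but it requires using $\deg(\pi)\geq 2$ rather than hedging on the strength of the quoted theorem.

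A smaller point: the ``subtlety'' you check at the end is both unnecessary and incorrectly justified. Proposition \ref{twisting} is stated for an arbitrary subfield $K\subseteq\overline{\mathbb{Q}}$, and its proof needs no closure of $K$ under taking roots: the twist $\gamma$ is chosen so that $E_\gamma$ is defined over $K$ and $g_\gamma(P)\in E_\gamma(K)$, even though $\gamma$ itself may only lie in $K^{\nicefrac{1}{2}}$. Moreover, the claim that $\mathbb{Q}^{tr}$ is closed under taking real roots of real polynomials is false (a square root of a totally real number that is not totally positive is not totally real), so it is fortunate that nothing in the argument requires it.
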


\begin{corollary}\label{bakerpetsche2}
Now let $K$ be a number field, $p$ an odd prime and $E$ an elliptic curve defined over $K$ having no additive reduction at all places of $K$ lying above $p$. If $L / K$ is a totally $p$-adic field of type $(e,\textsl{f})$, for $e,\textsl{f} \in \mathbb{N}$, and $f$ a Latt\`es map associated to $E$ with diagram \eqref{Lattes}, then we have:
\begin{itemize}
\item[i)] $\widehat{h}_f (\alpha) \geq \frac{25}{256}(\frac{\log{p}}{6eM})^3 ( \log(6eM)+\frac{\log{p}}{3e}+\frac{1}{6}h(j_E )+\frac{32}{5})^{-2}$ for all $\alpha \in L \setminus \PrePer (f)$
\item[ii)] $\vert \PrePer (f) \cap L \vert \leq \frac{24eM}{5\log{p}}(\log(6eM)+\frac{\log{p}}{3e}+\frac{1}{6}h(j_E )+\frac{32}{5})+2$,
\end{itemize}
where $M=\max \{p^{6\textsl{f}} +1 +2 p^{3\textsl{f} }, 72 e \nu\}$, with $\nu$ the maximum of $0$ and $-\ord_w(j_E)$ for all places $w \in M_K$ lying above $p$.  
\end{corollary}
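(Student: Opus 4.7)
The corollary is advertised as an immediate consequence of Proposition \ref{twisting} together with Theorem 6.3 of \cite{BP05}. My plan is therefore to combine these two, with the main effort going into checking that Baker--Petsche produces a bound uniform across all twists, so that the hypothesis of Proposition \ref{twisting} is actually met.

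First I would invoke Theorem 6.3 of \cite{BP05} as a black box: for any elliptic curve $E'/K$ having no additive reduction above $p$, it provides an effective lower bound for $\widehat{h}_{E'}$ on the non-torsion points of any totally $p$-adic field of type $(e,\textsl{f})$ of precisely the shape appearing in~(i), and the counting bound of~(ii) for the torsion part. Reading off the statement, the constants depend on $E'$ only through its $j$-invariant (together with the fixed data $K, p, e, \textsl{f}$); in particular, the quantity $\nu = \max(0,-\ord_w(j_E))$ entering the definition of $M$ is already manifestly a twist invariant. To feed this into Proposition \ref{twisting} I need the bound to hold uniformly over all twists $E_\gamma/L$ of $E$. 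Since $j_{E_\gamma}=j_E$, it suffices to show every such twist still has no additive reduction above $p$. As $p$ is odd, the twisting characters --- of order $2$ in the generic case, and of order dividing $4$ or $6$ when $j_E\in\{0,1728\}$ --- are tame at $p$, so they cannot turn good or multiplicative reduction into additive reduction. Hence every twist satisfies the Baker--Petsche hypothesis with the same constant $c$, depending only on $j_E$ and the fixed data.

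Finally I would apply Proposition \ref{twisting} with $L$ in place of $K$ (its proof goes through verbatim, since the role of the distinguished subfield is purely formal), obtaining
\[
\widehat{h}_f(\alpha)\;\geq\;\deg(\pi)\,c\;\geq\;c
\qquad\text{for all }\alpha\in L\setminus\PrePer(f),
\]
which is~(i). For~(ii) the analogous lifting-and-twisting argument sends each preperiodic $\alpha\in L$ of $f$ to a torsion point on some twist $E_\gamma(L)$; since the fibres of $\pi$ have size at most $\deg(\pi)\leq 3$, the Baker--Petsche torsion count on $E_\gamma(L)_{\rm tor}$ (which is again uniform in the twist by the same argument) transfers to the bound in~(ii). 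The main point requiring care is the twist-invariance of the ``no additive reduction above $p$'' hypothesis, which is exactly where the oddness of $p$ is used; everything else is a direct assembly of the Baker--Petsche constants.
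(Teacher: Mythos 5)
Your strategy fails at its central step: the claim that, because $p$ is odd, twisting cannot turn good or multiplicative reduction into additive reduction is false. A quadratic twist $E_\gamma$ at a place $w\mid p$ where $w(\gamma)$ is odd (i.e.\ where the twisting character is ramified --- tame ramification is exactly what occurs for odd $p$ and does not help) converts good reduction into additive, potentially good, reduction; compare $y^2=x^3+1$ with its twist $y^2=x^3+p^3$ at $p\geq 5$, or the paper's own Example \ref{counterexample elliptic}, which is built on precisely this phenomenon. Since the $\gamma$ produced in the proof of Proposition \ref{twisting} depends on $\alpha$ (e.g.\ $\gamma=\alpha^3+A\alpha+B$ when $\pi=x$), you have no control over its valuations above $p$, so the hypothesis ``no additive reduction above $p$'' is not inherited by the twists. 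The Baker--Petsche constant is not a function of $j_E$ alone --- it is only defined under that reduction hypothesis --- so the uniformity over twists that Proposition \ref{twisting} requires is simply not available. The paper says this explicitly in its proof: the reduction type is not preserved under $\overline{K}$-isomorphism, ``so we cannot apply Proposition \ref{twisting} to prove $i)$.'' (Also, the sentence in the paper advertising ``Proposition \ref{twisting} and Theorem 6.3 of \cite{BP05}'' refers to Corollary \ref{bakerpetsche1} about $\mathbb{Q}^{tr}$, not to this corollary.)

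The paper's actual argument avoids twisting altogether: given $\alpha\in L\setminus\PrePer(f)$, choose $P\in E(\overline{K})$ with $\pi(P)=\alpha$. Since $\deg(\pi)\leq 6$ (not $3$, as you write when counting fibres), multiplicativity of ramification indices and residue degrees shows $P$ is defined over a totally $p$-adic field of type $(6e,6\textsl{f})$, to which Baker--Petsche (Theorem 6.7 of \cite{BP05}) applies for $E$ itself, whose reduction hypothesis holds by assumption; then $\widehat{h}_f(\alpha)=\deg(\pi)\widehat{h}_E(P)\geq 2c(6e,6\textsl{f},p,j_E,K)$. This is also why the stated constants involve $6e$, $p^{6\textsl{f}}$, $p^{3\textsl{f}}$ and $72e\nu$ --- parameters your route, working over $L$ of type $(e,\textsl{f})$, would not reproduce. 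Part ii) then follows by lifting preperiodic $\alpha$ to torsion points of $E$ over such fields and accounting for the at most $2\deg(\pi)$ critical values of $\pi$ given by Riemann--Hurwitz.
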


\begin{proof} In \cite{BP05}, Theorem 6.7, Baker and Petsche prove effective positive constants $c(e,\textsl{f},p,j_E,K)$ and $c'(e,\textsl{f},p,j_E,K)$ such that $\widehat{h}_E (P)\geq c(e,\textsl{f},p,j_E,K)$ for all $P\in E(L)\setminus E_{\rm tor}$ and $\vert E_{\rm tor}(L)\vert \leq c'(e,\textsl{f},p,j_E,K)$.
In general, the reduction type of an elliptic curve over a place $v$ is not preserved under $\overline{K}$-isomorphisms. So we cannot apply Proposition \ref{twisting} to prove $i)$. By the multiplicativity of the ramification index and the residue degree, every extension of $L$ of degree $n$ is a $p$-adic field of type $(n e,n\textsl{f})$. Let $\alpha\in L$ be arbitrary and $P\in E(\overline{K})$ with $\pi(P)=\alpha$. Then $P$ is defined over a $p$-adic field of type $(6e, 6\textsl{f})$, since the degree of $\pi$ is at most $6$. Moreover, $P$ is a torsion point if and only if $\alpha$ is preperiodic. So, if $\alpha\in L$ is not preperiodic, then $\widehat{h}_f (\alpha)\geq 2c(6e,6\textsl{f},p,j_E,K)$ which is exactly the statement $i)$. By the Riemann-Hurwitz formula, there are at most $2\deg(\pi)$ critical values of $\pi$. Using this, the effective bound $c'(e,\textsl{f},p,j_E,K)$ and the arguments above, a short calculation proves statement $ii)$.
\end{proof}

Notice that the assumption on $E$ to have no additive reduction over the places $v\mid p$ can be achieved for all elliptic curves after a finite extension of the field of definition. In Section \ref{Main results} we will prove that we can drop the necessity of the bound $\textsl{f}$ in Corollary \ref{bakerpetsche2} if $E$ has multiplicative reduction at a finite place $v\mid p$. 

\end{section}

\begin{section}{Helpful calculations}\label{Helpful calculations}

\begin{Lemma}\label{nonempty}
Let $K$ be a number field with non-archimedean valuation $v\mid p$, and let $E$ be an elliptic curve defined over $K$. The set $E(K^{nr,v})\setminus E_{\rm tor}$ is not empty.
\end{Lemma}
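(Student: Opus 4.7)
The plan is to exhibit an explicit non-torsion algebraic point of $E(K^{nr,v})$ using a family of quadratic extensions of $K$ that are unramified at $v$. Writing $E$ in short Weierstrass form $y^2=f(x)=x^3+ax+b$ over $K$, I consider, for each $\alpha\in K$, the point $P_\alpha=(\alpha,\sqrt{f(\alpha)})\in E(\overline{K})$, which is defined over the quadratic (or trivial) extension $K(\sqrt{f(\alpha)})/K$.

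First, I arrange that $K(P_\alpha)\subseteq K^{nr,v}$. When the residue characteristic of $v$ is odd, the extension $K(\sqrt{d})/K$ is unramified at $v$ whenever $v(d)$ is even: pulling out a square factor reduces to $K(\sqrt{u})/K$ for a unit $u$, and $X^2-u$ is separable modulo $v$ since $2\not\equiv 0$ there. Letting $\pi_v\in K$ be a uniformizer at $v$ and taking $\alpha=\pi_v^{-2n}$ for $n\in\mathbb{N}$, the dominant term of $f(\alpha)$ is $\alpha^3$, so $v(f(\alpha))=-6n$ is even and $P_\alpha\in E(K^{nr,v})$.

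Second, I show $P_\alpha$ is non-torsion for large $n$. The standard Weil height $h$ and $\widehat{h}_E$ (pulled back via the $x$-coordinate) differ by an $O(1)$ depending only on $E$, so torsion points of $E$ have $x$-coordinate of bounded Weil height. On the other hand, $h(\pi_v^{-2n})$ grows linearly in $n$, so for all sufficiently large $n$, $\alpha=\pi_v^{-2n}$ cannot be the $x$-coordinate of any torsion point, and $P_\alpha\in E(K^{nr,v})\setminus E_{\rm tor}$.

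The main obstacle is the case $v\mid 2$: in residue characteristic $2$, square roots of units may generate wildly ramified quadratic extensions, and the parity criterion above fails. I would address this either by switching to a cubic construction (fix $\beta\in K$ and let $\alpha$ run over the roots of $f(x)=\beta^2$, choosing $\beta$ so that the resulting cubic extension of $K$ is unramified at $v$, using that $K_v$ always admits unramified cubic extensions), or by first enlarging $K$ by a finite unramified extension at $v$ chosen so that the parity argument applies after the enlargement; either way the new field lies inside $K^{nr,v}$ and the height argument above still forces most $P_\alpha$ to be non-torsion.
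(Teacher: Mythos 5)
Your construction for odd residue characteristic is sound and is essentially the paper's own argument: the paper likewise takes points $\bigl(x_n,\sqrt{f(x_n)}\bigr)$, choosing $x_n$ so that $f(x_n)$ is a $v$-adic unit (your choice of $x$ with even negative valuation works just as well), and it concludes non-torsion by Northcott's theorem, which is interchangeable with your observation that $h(\pi_v^{-2n})$ grows. The genuine gap is the case $v\mid 2$, which you leave to two alternative sketches, neither of which is a proof as written. The second alternative fails outright: the parity criterion is a statement about the residue characteristic, not about the base field. Over any field whose completion at the relevant place has residue characteristic $2$, a square root of a unit can generate a ramified quadratic extension (ramification of $L(\sqrt{u})/L$ at $2$ is governed by congruences of $u$ modulo higher powers of $2$, e.g.\ $\mathbb{Q}_2(\sqrt{3})/\mathbb{Q}_2$ is ramified), and a finite extension of $K$ unramified above $v$ does not change the residue characteristic, so the claim that ``the parity argument applies after the enlargement'' is false.

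Your first alternative is the right idea---it is exactly the paper's device of fixing the $y$-coordinate and solving the cubic in $x$---but the justification you offer does not establish it. That $K_v$ admits unramified cubic extensions is irrelevant; what must be shown is that the specific extension $K(\alpha)/K$, with $\alpha$ a root of $g_\beta(x)=x^3+ax+(b-\beta^2)$, is unramified above $v$, and for that you need a criterion together with an explicit choice of $\beta$. The paper takes a model with $v(a),v(b)\geq 0$ and sets $\beta=2n$ if $v(b)=0$ and $\beta=2n+1$ if $v(b)>0$; since $v\mid 2$, the discriminant $\Delta(g_\beta)=-4a^3-27(b-\beta^2)^2$ is then a $v$-adic unit, so $g_\beta$ is separable modulo every $w\mid v$ and each root lies in $K^{nr,v}$. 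This produces infinitely many points of degree at most $3$ over $K$ with $y$-coordinate in $\mathbb{Q}$, and Northcott's theorem (or a height-growth argument in the spirit of your odd case) then yields a non-torsion one. Without this discriminant computation and choice of $\beta$, the case $v\mid 2$ of the lemma remains unproved in your write-up.
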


\begin{proof}
 Write $E$ in short Weierstrassform $E: y^2 = x^3 + Ax +B$, with $v(A)$, $v(B) \geq 0$. First we consider the case $p\neq 2$. Let $w$ be any valuation on $K^{nr,v}$ which extends $v$. Then, as $v\nmid 2$, for all $\alpha \in K^{nr,v}$ with $w(\alpha)=0$ we also have $\sqrt{\alpha}\in K^{nr,v}$. For all $n\in\mathbb{N}$ set
\begin{equation*}
x_n= 
\begin{cases} np & \text{if $v(B)=0$}
\\
np+\sqrt{A} &\text{if $v(A)=0, v(B) >0$}
\\
np+1 &\text{if $v(A), v(B)>0$}
\end{cases}
\end{equation*}

Then $y_n = \sqrt{x_{n}^{3} + Ax_n +B} \in K^{nr,v}$ for all $n\in\mathbb{N}$. Hence, there are infinitely many points $(x_n, y_n)\in E(K^{nr,v})$ with $[K(x_n, y_n):K]\leq 4$. By Northcott's theorem $E$ has only finitely many torsion points of bounded degree. This proves the lemma in the case $p\neq 2$.

In case where $p=2$ we define $y_n = 2n$ if $v(B)=0$ and $y_n=2n+1$ if $v(B)>0$ for all $n\in\mathbb{N}$. Let $x_n \in \overline{K}$ be such that $(x_n ,y_n) \in E(\overline{K})$ for all $n\in\mathbb{N}$. This means that $x_n$ is a root of $f_n(x)=x^3 + Ax +(B-y_{n}^{2})$. The discriminant of $f_n$ is equal to $\Delta(f_n )=-4A^3 -27(B-y_{n}^{2})^2$. By choice of the $y_n$, $n\in\mathbb{N}$, we have $v(\Delta(f_n))=0$ and hence $x_n \in K^{nr,v}$ for all $n \in \mathbb{N}$. As before we conclude, using Northcott's theorem, that there is a non-torsion point in $E(K^{nr,v})$.
\end{proof}

We will also need a small result concerning ramification indices, which we will state as a lemma.

\begin{Lemma}\label{ramification}
Let $K$ be a field with discrete valuation $v$ and let $L/K$ be a finite and $K'/K$ any field extension. We choose any field which contains $L$ and $K'$ and build the compositum $LK'$ in this field. For all places $w'\mid v$ on $K'L$ define $v'=w'\vert_{K'}$ and $w=w'\vert_{L}$. If the residue field $k(v)$ is perfect, then we have $e_{w'\mid v'}\leq e_{w\mid v}$. 
\end{Lemma}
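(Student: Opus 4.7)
The plan is to pass to completions (or henselizations) at the relevant valuations and exploit the standard splitting of a finite henselian extension with separable residue extension into an unramified part and a totally ramified part.

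First I would note that ramification indices are unchanged upon passage to completions: one has $e_{w|v} = e(\widehat{L}_w/\widehat{K}_v)$ and $e_{w'|v'} = e(\widehat{(LK')}_{w'}/\widehat{K'}_{v'})$, and inside any algebraic closure of $\widehat{K}_v$ the completion of the compositum agrees with the compositum of the completions, $\widehat{(LK')}_{w'} = \widehat{L}_w \cdot \widehat{K'}_{v'}$. So one may assume $K_v$, $L_w$, and $K'_{v'}$ are all henselian, each carrying its unique extension of the base valuation.

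Since $k(v)$ is perfect, the residue extension $k(w)/k(v)$ is automatically separable, so the standard theory of henselian valued fields provides a unique maximal unramified subextension $M$ of $L_w/K_v$, with $[M:K_v] = f_{w|v}$ and $L_w/M$ totally ramified of degree $e_{w|v}$. Unramified extensions over a henselian base with perfect residue field are preserved under arbitrary base change; hence $MK'_{v'}/K'_{v'}$ is unramified, in particular $e(MK'_{v'}/K'_{v'}) = 1$. On the other hand, the elementary bound on degrees of compositums gives $[L_w K'_{v'} : MK'_{v'}] \leq [L_w : M] = e_{w|v}$.

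Finally, multiplicativity of ramification indices in the tower $K'_{v'} \subseteq MK'_{v'} \subseteq L_w K'_{v'}$ yields
$$e_{w'|v'} \;=\; e(L_wK'_{v'}/MK'_{v'}) \cdot e(MK'_{v'}/K'_{v'}) \;\leq\; [L_wK'_{v'}:MK'_{v'}] \cdot 1 \;\leq\; e_{w|v},$$
as claimed. The main subtlety I expect is not in the algebraic manipulations but in justifying that unramified extensions are preserved under the base change $K_v \rightsquigarrow K'_{v'}$ even when $K'/K$ is not algebraic and $v'$ may fail to be discrete on $K'$. The perfectness of $k(v)$ is precisely what is needed here: it ensures that a generator of $M/K_v$ satisfies a polynomial whose reduction mod $v'$ remains separable, so Hensel's lemma over $K'_{v'}$ identifies $MK'_{v'}/K'_{v'}$ as unramified.
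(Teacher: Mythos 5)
Your argument is correct and is essentially the paper's own proof: pass to completions, split $L_w/K_v$ at the maximal unramified subextension $M$ (using perfectness of $k(v)$), base-change to get $MK'_{v'}/K'_{v'}$ unramified, and bound $e_{w'\mid v'}$ by $[L_wK'_{v'}:MK'_{v'}]\leq[L_w:M]=e_{w\mid v}$. The only difference is cosmetic: you spell out the tower multiplicativity and the Hensel's lemma justification that the paper delegates to references (Lang and Neukirch).
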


\begin{proof} Denote by $M$ the maximal unramified extension of $K_v$ inside $L_w$. Then $M/K_v$ is unramified and $L_w /M$ is totally ramified (see \cite{La}, II Proposition 10). Hence, we have $e_{w\mid v}=[L_w :M]$. See for example \cite{Ne}, II Satz 7.2, for the fact that $K'_{v'}M/K'_{v'}$ is also unramified. Thus, we know $e_{w'\mid v'}\leq [(K'L)_{w'}:K'_{v'}M]$. Using the equation $(K'L)_{w'}=L_w K'_{v'}$ we get
$$e_{w\mid v} =[L_w :M] \geq [(K'L)_{w'}:K'_{v'}M] \geq e_{w' \mid v'}$$ as desired. \end{proof}

The real Lambert-$W$ function $W: [-\frac{1}{\mathbf{e}}, \infty ) \rightarrow \mathbb{R}$ is given as the multivalued inverse map of $F(x)=x\mathbf{e}^x$, where $\mathbf{e}=2.71828\dots$ is Euler's number. We have $W(-\frac{1}{\mathbf{e}}) =-1$, but elements in $(-\frac{1}{\mathbf{e}}, 0)$ have two pre-images under $F$. Thus, $W$ has two branches in the interval $[-\frac{1}{\mathbf{e}}, 0)$. The upper branch $W_0 (x)$ tends to $0$ for $x \nearrow	0$ and the lower branch $W_{-1} (x)$ tends to $-\infty$ for $x \nearrow	0$. We do not need deep information on the Lambert-$W$ function and take it mainly as an useful notation. For more information on this function we refer to \cite{CGHJK}.

Similarly as in \cite{BP05} we will use the following lemma.

\begin{Lemma}\label{lambert}
Let $a,b > 0$ be positive constants with $b\geq a$ and let $r: \mathbb{R}_{+} \rightarrow \mathbb{R}$ be given by $r(x)=ax-b-\log{x}$. Then $r(x)$ is positive for all $x > -\frac{1}{a}W_{-1}(-a\mathbf{e}^{-b})$ and we have the inequalities $$\frac{5}{8} < -\frac{1}{a}W_{-1}(-a\mathbf{e}^{-b}) < \frac{8}{5a}(\log{\frac{1}{a}}+b)\quad .$$
\end{Lemma}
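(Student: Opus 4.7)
The plan is to identify $x_0 := -\tfrac{1}{a}W_{-1}(-a\mathbf{e}^{-b})$ as the larger of the two positive zeros of $r$, and then to sandwich it by computing the sign of $r$ at the proposed endpoints. Substituting $u := -ax$ transforms $r(x) = 0$ into $u\mathbf{e}^u = -a\mathbf{e}^{-b}$; the elementary bound $t\mathbf{e}^{-t} \leq \mathbf{e}^{-1}$ together with $b \geq a$ yields $-a\mathbf{e}^{-b} \in [-\mathbf{e}^{-1}, 0)$, so both branches $W_0$ and $W_{-1}$ are defined. The smaller $W_{-1}$-value corresponds to the larger root $x_0$, while $W_0$ gives the smaller root $x_-$. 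Since $r''(x) = 1/x^2 > 0$, the function $r$ is strictly convex with its unique minimum at $1/a$, so $r > 0$ on $(x_0, \infty)$; this proves the first assertion.

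For the lower bound $x_0 > 5/8$ I would split on the size of $a$. If $a < 8/5$, then $1/a > 5/8$, and convexity gives $x_0 \geq 1/a$, whence $x_0 > 5/8$. If $a \geq 8/5$, I evaluate $r(5/8) = \tfrac{5a}{8} - b + \log\tfrac{8}{5}$; the hypothesis $b \geq a$ yields $r(5/8) \leq -\tfrac{3a}{8} + \log\tfrac{8}{5}$, and this is strictly negative because $\log\tfrac{8}{5} < 3/5 \leq 3a/8$. Hence $5/8 \in (x_-, x_0)$ and again $x_0 > 5/8$.

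For the upper bound, set $y := \tfrac{8}{5a}(b - \log a)$ and $u := b - \log a$. From $b \geq a$ and the universal inequality $a - \log a \geq 1$ one obtains $u \geq 1$, so $y \geq 8/(5a) > 1/a$, placing $y$ in the region where $r$ is increasing. A direct substitution yields
$$r(y) = \tfrac{1}{5}\bigl(3u - 5\log u - 5\log\tfrac{8}{5}\bigr),$$
and it suffices to show that $g(u) := 3u - 5\log u$ exceeds $5\log\tfrac{8}{5}$ on $[1, \infty)$. The function $g$ attains its minimum on this interval at $u = 5/3$ with value $5(1 - \log\tfrac{5}{3})$, so the claim reduces to the numerical inequality $\log\tfrac{5}{3} + \log\tfrac{8}{5} = \log\tfrac{8}{3} < 1$, which holds since $8/3 < \mathbf{e}$. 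Combining $r(y) > 0$ with $y > 1/a$ then forces $y > x_0$ by convexity.

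The only genuine obstacle here is the tightness of the constants $5/8$ and $8/5$: the lower bound hinges on $\log\tfrac{8}{5} < 3/5$ and the upper bound on $8/3 < \mathbf{e}$, each leaving only a small margin. Conceptually everything is a direct application of convexity of $r$ and the defining equation of the Lambert-$W$ function; the work lies in checking that these two engineered numerical inequalities do close.
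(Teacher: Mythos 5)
Your argument is correct, and it reaches the two numerical bounds by a genuinely different route than the paper. You only use the Lambert-$W$ function as a label for the larger root $x_0=-\frac{1}{a}W_{-1}(-a\mathbf{e}^{-b})$ (the identification of the two roots and the positivity of $r$ beyond $x_0$ is the same in both proofs); the bounds themselves you obtain by pure calculus on $r$: strict convexity with minimum at $1/a$, a sign check of $r$ at $x=\frac{5}{8}$ (with the case split $a<\frac{8}{5}$ versus $a\geq\frac{8}{5}$, resting on $\log\frac{8}{5}<\frac{3}{5}$) and a sign check at $y=\frac{8}{5a}(b-\log a)$, where the substitution $u=b-\log a\geq 1$ reduces everything to $\min_{u\geq 1}(3u-5\log u)=5(1-\log\frac{5}{3})>5\log\frac{8}{5}$, i.e.\ to $\frac{8}{3}<\mathbf{e}$. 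The paper instead proves a two-sided estimate for the branch itself, $W_{-1}(y)\leq\log(-y)\leq\frac{\mathbf{e}-1}{\mathbf{e}}W_{-1}(y)$ for $y\in[-\frac{1}{\mathbf{e}},0)$, derived from the defining equation together with $\frac{\log t}{t}\leq\frac{1}{\mathbf{e}}$, and then specializes $y=-a\mathbf{e}^{-b}$; this yields the sharper constants $\frac{\mathbf{e}-1}{\mathbf{e}}$ and $\frac{\mathbf{e}}{\mathbf{e}-1}$, of which $\frac{5}{8}$ and $\frac{8}{5}$ are roundings, and explains where those constants come from, at the price of manipulating the $W$-function directly. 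Your version buys elementarity and a direct verification of the stated constants, but hides their origin and needs the two engineered numerical checks you point out; both are complete proofs.
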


\begin{proof} The function $r(x)$ obviously tends to plus infinity, so we have to find the roots of $r(x)$ in order to prove the Lemma. We have
$$\begin{array}{ll}
 & ax-b-\log{x} = 0 \\
 \Leftrightarrow & \mathbf{e}^{-ax} x = \mathbf{e}^{-b} \\
 \Leftrightarrow & -a x \mathbf{e}^{-ax} = -a\mathbf{e}^{-b}\\
 \Leftrightarrow & x \in \left\{ -\frac{1}{a} W_0 (-a\mathbf{e}^{-b}), -\frac{1}{a} W_{-1} (-a\mathbf{e}^{-b}) \right\}
 \end{array}$$
Our assumption on $b$ provides that $W_0 (-a\mathbf{e}^{-b})$ and $W_{-1} (-a\mathbf{e}^{-b})$ are defined. As we have $-\frac{1}{a} W_0 (-a\mathbf{e}^{-b}) \leq -\frac{1}{a} W_{-1} (-a\mathbf{e}^{-b})$, we know that $r(x)\geq 0$ for all $x \geq -\frac{1}{a} W_{-1} (-a\mathbf{e}^{-b})$. As $r(x)$ is strictly increasing on $[-\frac{1}{a} W_{-1} (-a\mathbf{e}^{-b}),\infty)$, this proves the first part of the lemma. Now let $y$ be in the interval $[-\frac{1}{\mathbf{e}},0)$. By definition we have $y=W_{-1}(y)\mathbf{e}^{W_{-1}(y)}$. Multiplying this equation by $-1$ and taking the logarithm yields
\begin{align*} \log(-y) &= \log(-W_{-1}(y)) + W_{-1}(y) \\ &= W_{-1}(y)\left( 1-\frac{\log(-W_{-1}(y))}{-W_{-1}(y)}\right) \leq \frac{\mathbf{e}-1}{\mathbf{e}}W_{-1}(y)  .\end{align*}
As $-W_{-1}(y) \geq 1$, this leads to the inequality
$$W_{-1}(y) \leq \log(-y) \leq \frac{\mathbf{e}-1}{\mathbf{e}}W_{-1}(y) \quad . $$
Applying this to $-\frac{1}{a} W_{-1} (-a\mathbf{e}^{-b})$ and using $b\geq a$ gives us $$\frac{\mathbf{e}}{(\mathbf{e}-1)a}(\log{\frac{1}{a}}+b) \geq -\frac{1}{a} W_{-1} (-a\mathbf{e}^{-b}) \geq 1 - \frac{\log{a}}{a} \geq \frac{\mathbf{e}-1}{\mathbf{e}}\quad .$$
The estimation $\frac{\mathbf{e}}{\mathbf{e}-1}<\frac{8}{5}$ concludes the proof. \end{proof}
 
\end{section}

\begin{section}{Proof of the main results}\label{Main results}

From now on we fix the following notations. Let $K$ be a number field with non-archimedean absolute value $v \mid p$ and $E$ an elliptic curve over $K$ with $j$-invariant $j$. By $d$ we denote the degree $[K:\mathbb{Q}]$ and by $d_v$ the local degree $[K_v : \mathbb{Q}_p ]$. Let further $f$ be a Latt\`es map associated to $E$. 
For a fixed $e \in \mathbb{N}$ we define $$M_e (v) :=\{\alpha \in \overline{\mathbb{Q}} \vert e_{w\mid v} \leq e \text{ for all } w \in M_{K(\alpha )},~w\mid v \} \text{ and }$$ $$M_{e}^{E}(v) :=\{ P \in E (\overline{\mathbb{Q}}) \vert e_{w\mid v} \leq e \text{ for all } w \in M_{K(P)},~w\mid v \},$$ where $e_{w \mid v}$ is the ramification index of $w$ over $v$. Let further $\widehat{h}_f$ be the canonical height related to $f$ and $\widehat{h}_E$ the N\'eron-Tate height on $E$. Based on an idea of Sinnou David we will proof:

\begin{Theorem}\label{ellipticversion}
If $E$ has split-multiplicative reduction at $v\mid p$, then there are effective computable constants $c'(j,d,e,v)$, $c'_T (j,d,e,v)>0$ only depending on $j$, $d$, $e$ and $v$, with $\widehat{h}_E (P) \geq c'(j,d,e,v)$ for all $P \in M_{e}^{E}(v) \setminus E_{\rm tor}$ and such that there are less than $c'_T (j,d,e,v)$ torsion points in $M_{e}^{E}(v)$. More precisely, we have
 \begin{itemize}
 \item[i)] $\widehat{h}_E (P) \geq \frac{\frac{\log{p}}{2d}\mathfrak{c} - 3\log{2}}{(8\mathfrak{c}^3 - 2\mathfrak{c})(e!\ord_{v}(j^{-1}))^2}>0 \text{ for all } P\in M_{e}^{E} (v)\setminus E_{\rm tor}$
 \item[ii)] $\vert E_{\rm tor}\cap M_{e}^{E}(v) \vert < \frac{1}{3}\left( e!\mathfrak{c}\ord_v (j^{-1})\right)^3 + \frac{1}{2}\left( e!\mathfrak{c}\ord_v (j^{-1})\right)^2 ,$
 \end{itemize}
where $\mathfrak{c}:=\left\lceil \frac{10d}{\log{p}}\left( \log(\frac{6d}{\log{p}})+\frac{1}{6}h(j)+\frac{32}{5}\right)\right\rceil$.
\end{Theorem}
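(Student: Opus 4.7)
The proof uses the Tate uniformization of $E$ at $v$. Since $E$ has split-multiplicative reduction at $v$, there exists $q\in K_v^*$ with $\mathrm{ord}_v(q)=\mathrm{ord}_v(j^{-1})$ and a Galois-equivariant analytic isomorphism $\phi\colon \overline{K_v}^*/q^{\mathbb{Z}}\xrightarrow{\sim}E(\overline{K_v})$. Given $P\in M_e^E(v)\setminus E_{\rm tor}$ defined over $L=K(P)$, we lift $P$ to a parameter $u\in L_w^*$ for some place $w\mid v$. The ramification hypothesis $e_{w\mid v}\le e$ forces $v(u)\in\tfrac{1}{e}\mathbb{Z}$, so after replacing $u$ by $u\cdot q^{-k}$ we may assume $0\le v(u)<v(q)$.

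The key idea, due to Sinnou David, is to transfer the Baker--Petsche effective Bogomolov estimate (Corollary \ref{bakerpetsche2}) from the multiplicative group back to $E$. Set $n:=e!\cdot\mathrm{ord}_v(j^{-1})$. Because the actual ramification index $e_{w\mid v}$ divides $e!$, the integer $n\cdot v(u)$ is a multiple of $v(q)$, so the Tate parameter of $Q:=[n]P$ may be chosen as a $v$-adic unit $u'\in\mathcal{O}_{L_w}^*$, placing $Q$ on the identity component of the N\'eron model at $v$. On this identity component the uniformization identifies $Q$ with the class of $u'$ modulo $\langle q\rangle$, so that $\widehat{h}_E(Q)$ agrees with the logarithmic Weil height of $u'$ on $\mathbb{G}_m$ up to an explicit theta-like correction supported at $v$. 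Crucially, $L_w$ is a finite extension of the maximal unramified extension $K_v^{\mathrm{nr}}$ of degree at most $e$: that is, $u'$ sits in a field which is totally ramified of degree $\le e$ over an unramified extension of $K_v$. In this purely totally-ramified regime one invokes the Bogomolov-type argument behind Corollary \ref{bakerpetsche2} with $\mathsf{f}=1$; an application of Lemma \ref{lambert} to solve the resulting implicit logarithmic inequality produces $\widehat{h}_E(Q)\ge c_1/\mathfrak{c}^3$ with $c_1$ matching the numerator of the claimed bound. Dividing by $n^2=(e!\,\mathrm{ord}_v(j^{-1}))^2$ then yields part~(i).

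For the torsion count~(ii), the torsion of $E$ lying in $M_e^E(v)$ splits under $\phi$ into two pieces: those whose Tate parameter has nontrivial fractional part of $v(u)/v(q)$, contributing at most $N=e\,\mathrm{ord}_v(j^{-1})$ cosets modulo the identity component; and those which are roots of unity in the relevant totally-ramified-over-unramified field, whose cardinality is bounded effectively by the torsion statement in Corollary \ref{bakerpetsche2}. Multiplying these two counts and absorbing the cubic dependence on $\mathfrak{c}$ from Baker--Petsche yields the polynomial estimate of (ii).

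The principal obstacle is the transfer in Step~2: Corollary \ref{bakerpetsche2} is stated for totally $p$-adic fields of type $(e,\mathsf{f})$ with $\mathsf{f}$ finite, whereas the field $L$ may have unbounded residue degree at $v$. The resolution is that, once $Q$ has been forced onto the identity component of the N\'eron model of the Tate curve, the only surviving part of the local height at $v$ is determined by the image of $u'$ in the formal multiplicative group; the residue-degree dependence in Baker--Petsche's successive-minima argument then drops out, leaving an estimate in which only the totally ramified factor of $e$ (and the totally $p$-adic contributions at the remaining places) appears. Tracking all constants explicitly through this reduction, with the help of Lemma \ref{lambert}, gives the precise numerical form in (i) and (ii).
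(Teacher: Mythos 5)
You are right that the ramification hypothesis forces $Q:=[e!\,\ord_v(j^{-1})]P$ onto the identity component $E_0(K(P)_w)$ for every $w\mid v$; this is exactly the paper's first step. The gap is in what you do next. You claim that, once $Q$ lies on the identity component, $\widehat{h}_E(Q)$ ``agrees with the logarithmic Weil height of $u'$ up to a correction at $v$'' and that one may then run Corollary \ref{bakerpetsche2} with $\textsl{f}=1$ because ``the residue-degree dependence drops out''. Neither assertion is justified, and the second is precisely the content of the theorem. The Tate parameter $u'$ is a local-analytic object in $L_w^*$, not an algebraic number with a well-defined global Weil height, and the global height $\widehat{h}_E(Q)$ receives contributions from every place of $K(P)$, in particular the archimedean ones, about which the uniformization at $v$ says nothing. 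Corollary \ref{bakerpetsche2} cannot be invoked either: it requires the field to be totally $p$-adic of type $(e,\textsl{f})$, i.e.\ of bounded residue degree, and its constant degenerates as $\textsl{f}\to\infty$ through the Hasse-bound term $p^{6\textsl{f}}$; here $K(P)$ has unbounded residue degree above $v$, so setting $\textsl{f}=1$ is simply not permitted. What actually eliminates the residue degree is the explicit formula for the local height on $E_0$ of a Tate curve, $\lambda_w(Q)\geq\frac{1}{12}w(j^{-1})=\frac{1}{12}v(j^{-1})>0$ for all $w\mid v$, independent of the residue field. The paper then makes this quantitative globally: it forms $\Lambda_s=\{iQ:1\leq i\leq s\}$, bounds $\sum_{R\neq R'}\lambda_w(R-R')$ from below at archimedean places by Elkies' theorem (Baker--Petsche, Appendix A), by non-negativity resp.\ Hindry--Silverman at the other finite places, and by the positive term above at $w\mid v$, and finally applies Lemma \ref{lambert} to the resulting inequality $\frac{\log p}{12d}s^2-\left(\frac{1}{12}h(j)+\frac{16}{5}\right)s-\frac{1}{2}s\log s\leq\left(\frac{1}{6}s^4-\frac{1}{6}s^2\right)\widehat{h}_E(Q)$, optimizing near $s=2\mathfrak{c}$. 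None of this global machinery appears in your sketch, so the intermediate bound $\widehat{h}_E(Q)\geq c_1/\mathfrak{c}^3$ is unsupported.

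The torsion count has the same problem in sharper form. Locally at a single place $w\mid v$ there are infinitely many torsion points on the identity component whose Tate parameter is a prime-to-$p$ root of unity, all unramified at that place; so multiplying a component-group count by a count of roots of unity in a ``totally-ramified-over-unramified'' field cannot give a finite bound. Finiteness comes from imposing the ramification condition at all places $w\mid v$ of $K(P)$ simultaneously, and in the paper it is extracted from the same global inequality: for torsion $P$ the left-hand side vanishes, Lemma \ref{lambert} makes the right-hand side positive at $s=\mathfrak{c}$, so $Q$ has order less than $\mathfrak{c}$, whence $P$ has order less than $e!\,\mathfrak{c}\,\ord_v(j^{-1})$, and the stated bound follows from $|E[k]|=k^2$. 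Your appeal to the torsion statement of Corollary \ref{bakerpetsche2} is again blocked by the unbounded residue degree.
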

  
Note that since $E$ has multiplicative reduction at $v\mid p$, $h(j)$ is at least $\frac{\log{p}}{d}$. Hence, $\mathfrak{c}\geq 1$. 
We will also prove the dynamical analogue, which states the following.

\begin{Theorem}\label{lattesversion}
If $E$ has split-multiplicative reduction at $v$, then there are effective computable constants $c(j,d,e,v)$, $c_P (j,d,e,v)>0$ only depending on $j$, $d$, $e$ and $v$, with $\widehat{h}_f (\alpha) \geq c(j,d,e,v)$ for all $\alpha \in M_e \setminus \PrePer (f)$ and such that there are less than $c_P (j,d,e,v)$ preperiodic points in $M_e (v)$. With the notation of Theorem \ref{ellipticversion}, we have
\begin{itemize}
\item[i)] $\widehat{h}_f (\alpha)\geq \frac{\frac{\log{p}}{2d}\mathfrak{c} - 3\log{2}}{2(8\mathfrak{c}^3 - 2\mathfrak{c})(e!\ord_{v}(j^{-1}))^2}>0 \text{ for all }\alpha\in M_e (v) \setminus \PrePer(f)$
\item[ii)] $\vert \PrePer(f) \cap M_{e} (v) \vert < \frac{4}{3}\left( e! \mathfrak{c}\ord_v (j^{-1}) \right)^3 +\left( e!\mathfrak{c}\ord_v (j^{-1}) \right)^2.$
\end{itemize}
\end{Theorem}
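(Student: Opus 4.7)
The plan is to reduce Theorem \ref{lattesversion} to Theorem \ref{ellipticversion} via the fibre relation $\widehat{h}_f\circ\pi=\deg(\pi)\,\widehat{h}_E$ provided by Lemma \ref{heightrelation}, together with the identification $\PrePer(f)=\pi(E_{\rm tor})$. The latter holds because the preperiodic points of the isogeny $\Psi$ are exactly the torsion of $E$, and the commuting diagram \eqref{Lattes} transports this to $f$.

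For part (i), take $\alpha\in M_e(v)\setminus\PrePer(f)$ and pick any $P\in E(\overline{\mathbb{Q}})$ with $\pi(P)=\alpha$; then $P\notin E_{\rm tor}$. The extension $K(P)/K(\alpha)$ has degree at most $\deg(\pi)\leq 6$, so Lemma \ref{ramification} forces $e_{w\mid v}\leq\deg(\pi)\cdot e$ for every place $w$ of $K(P)$ above $v$; that is, $P\in M_{\deg(\pi)\cdot e}^{E}(v)\setminus E_{\rm tor}$. Theorem \ref{ellipticversion}(i) then yields a positive lower bound on $\widehat{h}_E(P)$, and multiplying by $\deg(\pi)$ gives the desired lower bound on $\widehat{h}_f(\alpha)$.

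For part (ii), the map $\pi$ restricts to a surjection $E_{\rm tor}\cap M_{\deg(\pi)\cdot e}^{E}(v)\twoheadrightarrow\PrePer(f)\cap M_e(v)$ whose generic fibres have size $\deg(\pi)$, so the count of preperiodic points in $M_e(v)$ is controlled by Theorem \ref{ellipticversion}(ii). A Riemann--Hurwitz accounting (the map $\pi$ has at most $2\deg(\pi)$ critical values on $\mathbb{P}^{1}$) allows one to replace the trivial upper bound $|E_{\rm tor}\cap M_{\deg(\pi)\cdot e}^{E}(v)|$ by its generic-fibre quotient plus an $O(\deg(\pi))$ correction, giving the displayed constants.

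The main obstacle is recovering the precise constants of Theorem \ref{lattesversion}: the stated bound features $e!$ rather than the $(\deg(\pi)\cdot e)!$ that a naive lift produces. The natural remedy is to invoke Proposition \ref{twisting}: rather than enlarging the field of definition of $P$, pick $\gamma$ so that $g_\gamma(P)$ lies in $E_\gamma(K(\alpha))$, a field whose ramification over $v$ is still bounded by $e$. Since $E_\gamma$ keeps the same $j$-invariant as $E$ and the constants of Theorem \ref{ellipticversion} depend only on $j$, $d$, $e$, and $v$, this should give the advertised bound; what has to be verified is, case by case for each admissible $\pi\in\{x,x^2,x^3,y^2\}$, that such a $\gamma$ can be found explicitly (as in the $\pi(x,y)=x^2$ computation already carried out in the proof of Proposition \ref{twisting}) and that $E_\gamma$ continues to satisfy the hypothesis of split-multiplicative reduction at a place above $v$, possibly after absorbing a bounded unramified base change at $v$. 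Executing this bookkeeping, and handling the exceptional $j$-invariants $0$ and $1728$, is the technical heart of the reduction.
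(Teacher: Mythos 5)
There is a genuine gap: your argument, as it stands, does not produce the constants asserted in the theorem, and the repair you sketch does not work. Two points are decisive. First, split-multiplicative reduction at $v$ forces $\ord_v(j^{-1})>0$, so $j\neq 0,1728$ and the only admissible covering is $\pi(x,y)=x$ of degree $2$; the case analysis over $\pi\in\{x,x^2,x^3,y^2\}$ and the worry about the exceptional $j$-invariants are vacuous here, and $\deg(\pi)\leq 6$ should everywhere be $\deg(\pi)=2$. Second, and more importantly, the paper does not feed the lifted point $P$ into Theorem \ref{ellipticversion} as a black box with $e$ replaced by $2e$ (which, as you note, would give $(2e)!$, not $e!$). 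Instead it re-enters the \emph{proof} of Theorem \ref{ellipticversion} with the multiplier $Q:=2\,e!\ord_v(j^{-1})P$: since $[K(P):K(\alpha)]\leq 2$, every ramification index $e_{w\mid v}$ for $w\in M_{K(P)}$ is either $\leq e$ or of the form $2e'$ with $e'\leq e$, so the least common multiple of these indices divides $2\,e!$ and $Q\in E_0(K(P)_w)$ for all $w\mid v$. The denominator then becomes $(2\,e!\ord_v(j^{-1}))^2=4(e!\ord_v(j^{-1}))^2$, and the factor $2$ coming from $\widehat{h}_f(\alpha)=2\widehat{h}_E(P)$ cancels half of it, giving exactly item (i); the same multiplier bounds the order of a torsion lift by $2\,e!\,\mathfrak{c}\ord_v(j^{-1})$, and the $2$-to-$1$ count away from the four critical values of $\pi$ gives item (ii). This refinement is the missing idea; without it your bound is strictly weaker than the stated one.

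Your proposed remedy via Proposition \ref{twisting} cannot close this gap. Twisting does not preserve reduction type -- the paper says so explicitly in the proof of Corollary \ref{bakerpetsche2} and this is precisely why Proposition \ref{twisting} is not used there: for a $\gamma$ depending on $\alpha$ the curve $E_\gamma$ will in general have additive (potential multiplicative) or nonsplit reduction at places of $K(\alpha)$ above $v$, and when $\gamma$ has odd valuation at such a place the twist is only trivialized by a \emph{ramified} quadratic extension, so no bounded unramified base change restores the split-multiplicative hypothesis. Moreover $E_\gamma$ is defined over $K(\alpha)$, whose degree over $\mathbb{Q}$ is unbounded, while the constants of Theorem \ref{ellipticversion} depend on the degree $d$ of the base field (through $\mathfrak{c}$), so invoking that theorem for $E_\gamma$ over $K(\alpha)$ would not yield constants depending only on $j,d,e,v$. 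Hence the ``bookkeeping'' you defer is not technical polish but the point at which this route breaks down; the correct route is the direct re-run of the elliptic argument with the doubled multiplier described above.
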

  
\begin{proof}[Proof of Theorem \ref{ellipticversion}] 
Let $P$ be a point in $M_{e}^{E} (v)$, $w\mid v$ a valuation on $K(P)$ and $k_P (w)$ the residue field of $K(P)_w$. We choose a minimal Weierstrass equation for $E$ over $K(P)_w$ with discriminant $\Delta$. Then $\widetilde{E}$ is the reduction of $E$ modulo $w$ and $\widetilde{E}_{ns}$ is the set of all non-singular points in $\widetilde{E}$. We set $E_{0}(K(P)_w):=\{ P\in E (K(P)_w) \vert \widetilde{P}\in\widetilde{E}(k_P (w))_{ns}\}$. $E_{0}(K(P)_w)$ is a subgroup of $E (K(P)_w)$ of index $\ord_w (j^{-1})=e_{w\mid v}\ord_v (j^{-1})$ (see \cite{Si99}, Cor. IV.9.2). So we have $e_{w\mid v}\ord_v (j^{-1}) P\in E_{0}(K(P)_w)$. From the choice of $P$ it is clear that $Q:=e!\ord_v (j^{-1}) P \in E_{0}(K(P)_w)$ for all $w\mid v$. Recall that $w(.)$ is the unique valuation on $K(P)$ extending the usual $p$-adic valuation on $\mathbb{Q}$, and $\ord_w (.)=\frac{e_{w\mid p}}{\log{p}} w(.)$ is the normalization of $w(.)$ to a function onto $\mathbb{Z}$.

We take the local heights $\lambda_w$ on $E(K(P)_w )\setminus{0}$ normalized such that we have the equation
$$ \widehat{h}_E (P) =\frac{1}{[K(P):\mathbb{Q}]}\sum_{w \in M_{K(P)}} d_w \lambda_w (P) \quad \forall P \in E (\overline{K})\setminus{0} \quad .  $$

For $Q \in E_{0}(K(P)_w)$ and $w\mid v$, $w \in M_{K(Q)}$ we have 
\begin{equation}\lambda_w (Q) = \frac{1}{2} \max \{w(x(Q)),0\}+\frac{1}{12}w(\Delta) \geq \frac{1}{12}w(j^{-1})=\frac{1}{12}v(j^{-1}) \quad . \label{ueberv}\end{equation}
See \cite{Si99}, Theorem VI.4.1. Notice that $E$ has split-multiplicative reduction over $K(P)$ at every $w\mid v$. Hence, we can use $w(\Delta)=w(j^{-1})$.

We define the set $\Lambda_s =\{iQ \vert i \in \mathbb{N}, i \leq s \}$, for all $s\in \mathbb{N}$, such that $\Lambda_s$ consists of exactly $s$ points. Now we will estimate $\widehat{h}_E (P)$, respectively $\widehat{h}_E (Q)$, using bounds for the local heights. For a given absolute value $w$ we set $w^+$ to be the maximum of $w$ and $0$.

If $w$ is archimedean, then we can use a theorem of Elkies improved by Baker and Petsche (see \cite{BP05}, Appendix A). Namely
\begin{equation}
\sum_{\substack{R,R' \in \Lambda_s \\ R\neq R'}} \lambda_w (R-R') \geq  -\frac{s}{2}\log{s}-\frac{16}{5}s - \frac{1}{12}w^+ (j^{-1})s . \label{archimedisch}
\end{equation}
For a non-archimedean $w\in M_{K(P)}$, with $w(j^{-1})\leq 0$, $E$ has potential good reduction at $w$ (See for example \cite{Si86}, Proposition VII.5.1). Let $K' / K(P)$ be a finite extension such that $E$ has good reduction over $K'$ at a $w' \mid w$. Then the equation we have used in \eqref{ueberv} shows that $\lambda_{w'}$ is not negative on $E (K'_{w'})$. As $\lambda_{w'}$ and $\lambda_{w}$ coincide on $E (K(P)_w)$, $\lambda_{w}$ is a non-negative function.

For non-archimedean absolute values $w$ with $w(j^{-1})> 0$ \cite{HS90}, Proposition 1.2, gives the inequality
$$ \sum_{\substack{R,R' \in \Lambda_s \\ R\neq R'}} \lambda_w (R-R') \geq \frac{1}{12}\left( \frac{s}{\ord_w (j^{-1})}\right)^2 w(j^{-1}) -\frac{s}{12}w(j^{-1}) .$$

Thus, for an arbitrary non-archimedean absolute value we will use the estimation
\begin{equation}
 \sum_{\substack{R,R' \in \Lambda_s \\ R\neq R'}} \lambda_w (R-R') \geq -\frac{s}{12}w^+ (j^{-1}) . \label{nichtarchimedisch}
\end{equation}

With \eqref{ueberv}, \eqref{archimedisch}, \eqref{nichtarchimedisch} we get:
\begin{align*} &\sum_{\substack{R,R' \in \Lambda_s \\ R\neq R'}} \widehat{h}_E (R-R')=  \sum_{\substack{R,R' \in \Lambda_s \\ R\neq R'}} \frac{1}{[K(P):\mathbb{Q}]} \sum_{w\in M_{K(P)}} d_w \lambda_w (R-R') \\ 
\geq &\frac{1}{[K(P):\mathbb{Q}]}\sum_{w\mid \infty} d_w (-\frac{1}{2}s\log{s} - \frac{16}{5}s)  - \frac{1}{[K(P):\mathbb{Q}]}\sum_{w\mid \infty} d_w w^+ (j^{-1})\frac{1}{12}s \\
- &\frac{1}{[K(P):\mathbb{Q}]} \sum_{w \nmid \infty, w \nmid v} d_w \frac{s}{12}w^+ (j^{-1}) 
+ \sum_{\substack{R,R' \in \Lambda_s \\ R\neq R'}}  \frac{1}{[K(P):\mathbb{Q}]} \sum_{w\mid v} d_w \frac{1}{12}v(j^{-1}) \end{align*}
The last sum consists of exactly $s^2 -s$ terms. Hence, the above is equal to
\begin{equation*} -\frac{1}{2}s\log{s}-\frac{16}{5}s- \frac{s}{[K(P):\mathbb{Q}]}\sum_{w\nmid v} \frac{d_w w^+ (j^{-1})}{12} + \frac{s^2 -s}{[K(P):\mathbb{Q}]} \sum_{w\mid v} \frac{d_w v(j^{-1})}{12}. \end{equation*}

We know that for all $w\mid v$ we have $v(j^{-1})=w(j^{-1})=w^+ (j^{-1})$. Thus, we can use the definition of the standard logarithmic height $h$ to obtain 
\begin{align} \sum_{\substack{R,R' \in \Lambda_s \\ R\neq R'}} \widehat{h}_E (R-R')  &\geq \frac{d_v v(j^{-1})}{12d}s^2 - \left(\frac{1}{12} h(j) + \frac{16}{5}\right) s - \frac{1}{2}s\log{s} \nonumber\\
  &\geq \frac{\log{p}}{12d}s^2 - \left(\frac{1}{12} h(j) + \frac{16}{5}\right) s - \frac{1}{2}s\log{s} \label{hoehe1} . \end{align}

If $P$ is a torsion point, then the left hand side is equal to zero. Clearly $h(j)$ is greater than or equal to $\frac{d_v v(j^{-1})}{d} \geq \frac{\log{p}}{d}$, so we can apply Lemma \ref{lambert} to deduce that the right hand side is greater than zero for $$s\geq \frac{48d}{5\log{p}}\left(\log(\frac{6d}{\log{p}})+\frac{1}{6}h(j)+\frac{32}{5}\right).$$ As $s$ is a natural number, we get a contradiction for $s=\mathfrak{c}$. This shows that there cannot exist a torsion point $P \in M_{e}^{E}(v)$ such that the order of $e!\ord_v (j^{-1})P$ is greater than or equal to $\mathfrak{c}$. Hence, there cannot exist a torsion point $P \in M_{e}^{E} (v)$ of order greater than or equal to $\mathfrak{c} \ord_v (j^{-1}) (e!)$. Using $\vert E [k] \vert = k^2$ and $0 \in E[k]$, for all $k\in \mathbb{N}$, we get that there are less than $\frac{1}{3}\left( e!\mathfrak{c}\ord_v (j^{-1}) \right)^3 +\frac{1}{2}\left( e!\mathfrak{c}\ord_v (j^{-1}) \right)^2 $ torsion points in $M_{e}^{E}(v)$.

From now on we assume that $P$ is no torsion point. Then $\Lambda_s$ is defined for all $s \in\mathbb{N}$ and so \eqref{hoehe1} is valid for all $s \in \mathbb{N}$.
The definition of $\Lambda_s$ and the property $\widehat{h}_E (kQ)=k^2 \widehat{h}_E (Q)$ for all $k\in\mathbb{Z}$ leads us to 
\begin{equation} \sum_{\substack{R,R' \in \Lambda_s \\ R\neq R'}} \widehat{h}_E (R-R') = \left(2\sum_{i=1}^{s-1} i^2 (s-i)\right) \widehat{h}_E (Q) = \left(\frac{1}{6}s^{4}-\frac{1}{6} s^2 \right) \widehat{h}_E (Q).\label{hoehe2} \end{equation}

If we further use \eqref{hoehe1} and the definition of $Q$, we find that the height $\widehat{h}_E (P)$ is bounded from below by
$$C'(j,d,e,v):=\max_{s\in\mathbb{N}}\frac{\frac{\log{p}}{2d}s - \left(\frac{1}{2} h(j) + \frac{96}{5}\right) - 3\log{s}}{(s^{3}- s)(e!\ord_v (j^{-1}))^2}.$$
$C'(j,d,e,v)$ is obviously positive. In what follows we will give a lower bound for $C'(j,d,e,v)$. Let $\mathfrak{c}_W := -\frac{6d}{\log{p}}W_{-1}\left(-\frac{\log{p}}{6d}H(j)^{\nicefrac{1}{6}}\mathbf{e}^{\nicefrac{32}{5}}\right)$, where $H(j)$ is the multiplicative height of $j$, be the greatest root of the real function $r(x)=\frac{\log{p}}{2d}x -\left(\frac{1}{2}h(j)+\frac{96}{5}\right)-3\log{x}$ (see Lemma \ref{lambert}). Then we know that this function is strictly positive for all $x > \mathfrak{c}_W$. In particular, we have $r(2x)\geq \frac{\log{p}}{2d}x - 3\log{2}$ for all $x\geq\mathfrak{c}_W$, with equality if and only if $x=\mathfrak{c}_W$. Again by Lemma \ref{lambert} we have $1<2\mathfrak{c}_W<2\mathfrak{c}$. With this we finally deduce
\begin{equation}
C'(j,d,e,v)\geq \frac{r(2\mathfrak{c})}{(8\mathfrak{c}^3 - 2\mathfrak{c})(e!\ord_{v}(j^{-1}))^2}\geq \frac{\frac{\log{p}}{2d}\mathfrak{c} - 3\log{2}}{(8\mathfrak{c}^3 - 2\mathfrak{c})(e!\ord_{v}(j^{-1}))^2} >0 ,\nonumber \end{equation}
which concludes the proof. \end{proof}

\begin{proof}[Proof of Theorem \ref{lattesversion}] We will combine Lemma \ref{heightrelation} and Theorem \ref{ellipticversion}. $E$ is assumed to have split-multiplicative reduction at $v$, and hence $\pi$ has degree two. Let $\alpha$ be a point in $M_e (v)$ and take a point $P \in E(\overline{K})$ with $\pi(P)=\alpha$. Then for all $w\mid v$ in $M_{K(P)}$ we have either $e_{w\mid v} \leq e$ or $e_{w\mid v} =2e'$ with $e'\leq e$. Now we can start exactly the proof above with $Q:=e!2\ord_v(j^{-1})P$ instead of $e!\ord_v(j^{-1})P$. 

If $\alpha$ is preperiodic, then $P$ is a point of order less than $e!2\mathfrak{c}\ord_v (j^{-1})$. So there are less than $\frac{8}{3}\left( e!\mathfrak{c}\ord_v (j^{-1}) \right)^3 + 2 \left( e! \mathfrak{c}\ord_v (j^{-1}) \right)^2 $ choices for $P$. 
As every $\alpha$, that is no critical value of $\pi$, has exactly two pre-images under $\pi$, and there are exactly $4$ critical values of $\pi$ (see \cite{Si07}, Lemma 6.38), we get
$$\vert \PrePer(f) \cap M_{e} (v) \vert < \frac{4}{3} \left( e! \mathfrak{c}\ord_v (j^{-1}) \right)^3 + \left( e! \mathfrak{c}\ord_v (j^{-1}) \right) +2 .$$
If $\alpha$ is no preperiodic point, then $P$ is not a torsion point and we have
$$\widehat{h}_f (\alpha)=2\widehat{h}_E (P) \geq \frac{\frac{\log{p}}{2d}\mathfrak{c} - 3\log{2}}{2(8\mathfrak{c}^3 - 2\mathfrak{c})(e!\ord_{v}(j^{-1}))^2}>0 \quad .$$
This concludes the proof.
\end{proof}

\end{section}

\begin{section}{Corollaries and additional results}\label{Corollaries}

Now we want to study the behavior of the canonical height $\widehat{h}_E$ on the set $M_{e}^{E} (v)$ if $E$ has not split-multiplicative reduction at $v$. If $E$ has multiplicative or potential multiplicative reduction at $v$, then it has split-multiplicative reduction after a finite field extension. So Theorem \ref{ellipticversion} will be true in this case after a small adjustment of the constants. In the case of good reduction of $E$ at $v$ the criterion of N\'eron-Ogg-Shafarevich will show that there are infinitely many torsion points and points of arbitrary small positive height in $M_{e}^{E}(v)$ for all $e\in\mathbb{N}$. If $E$ has additive potential good reduction at $v$, then we can prove that Theorem \ref{ellipticversion} is true for $e=1$, i.e. for the field $K^{nr,v}$.

\begin{corollary}\label{additivered}
Let $c'(j,d,e,v)$ and $c'_T (j,d,e,v)$ be the constants given in Theorem \ref{ellipticversion}. If $E$ has nonsplit-multiplicative or additive potential multiplicative reduction at $v\mid p$, then we have
  \begin{itemize}
 \item[i)] $\widehat{h}_E (P) \geq c'(j,kd,e)\text{ for all } P\in M_{e}^{E} (v)\setminus E_{\rm tor}$
 \item[ii)] $\vert E_{\rm tor}\cap M_{e}^{E}(v) \vert < c'_T (j,kd,e).$
 \end{itemize}
 Here $k\leq48$ is the smallest degree of a field extension $K'/K$ such that $E$ over $K'$ has split-multiplicative reduction at a place $v'\mid v$, $v' \in M_{K'}$.
\end{corollary}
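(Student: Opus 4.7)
The plan is to pass to a finite extension $K'/K$ over which $E$ acquires split-multiplicative reduction at some place $v' \mid v$, and then apply Theorem \ref{ellipticversion} to the base-changed curve. Choose $K'/K$ of minimal degree $k$ with this property. I would first justify that $k \leq 48$. In the nonsplit-multiplicative case, the unramified quadratic extension of $K$ at $v$ already suffices (it realises the quadratic extension of the residue field that splits the two tangent lines at the node), so $k \leq 2$. In the additive potential multiplicative case (Kodaira type $I_n^*$), a suitable ramified quadratic twist yields multiplicative reduction of type $I_n$, after which at most a further unramified quadratic splits the torus, giving $k \leq 4$. Both bounds lie well within the universal bound $48$ from Grothendieck's semistable reduction theorem in arbitrary residue characteristic.

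Next I would verify that $M_e^E(v)$, defined relative to $K$ and $v$, is contained in the analogous set defined relative to $K'$ and $v'$. For $P \in M_e^E(v)$ and any $w' \in M_{K'(P)}$ with $w' \mid v'$, set $w = w'|_{K(P)}$. Since the residue field of $v$ is a finite field, hence perfect, Lemma \ref{ramification} applied with the roles of its $K$, $L$, $K'$ taken by our $K$, $K(P)$, $K'$ yields $e_{w' \mid v'} \leq e_{w \mid v} \leq e$, so $P$ indeed satisfies the required ramification bound over $K'$ at $v'$.

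Theorem \ref{ellipticversion} applied to $E$ regarded over $K'$ at the place $v'$ then yields both parts (i) and (ii) of the corollary: the $j$-invariant is unchanged, the degree of the base field has become $[K':\mathbb{Q}] = kd$, and the reduction is split-multiplicative at $v'$, so the resulting bounds have the shape $c'(j, kd, e, v')$ and $c'_T(j, kd, e, v')$. The dependence on $v'$ is absorbed into the shorthand $c'(j, kd, e)$ and $c'_T(j, kd, e)$ because $v'$ is determined by $v$ together with the minimal choice of $K'/K$. The main technical point in the whole argument is the bound $k \leq 48$ (in fact sharper, $k \leq 4$, in the two cases at hand); once that is settled, the remainder reduces formally to the ramification comparison of Lemma \ref{ramification} and the split-multiplicative case already handled in Theorem \ref{ellipticversion}.
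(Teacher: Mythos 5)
Your proposal is correct and follows essentially the same route as the paper: pass to a minimal extension $K'/K$ of degree $k$ realizing split-multiplicative reduction at some $v'\mid v$, use Lemma \ref{ramification} to check that points of $M_e^E(v)$ still satisfy the ramification bound over $(K',v')$, and apply Theorem \ref{ellipticversion} with $d$ replaced by $kd$. The only difference is cosmetic: where the paper cites Silverman (Prop.\ VII.5.4(c) and Cor.\ A.1.4(a)) for $k\leq 48$, you argue directly that a quadratic (resp.\ at most quartic) extension suffices in the two reduction cases at hand, which is sharper and equally valid.
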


\begin{proof} By assumption there is a field extension $K'/K$ such that $E$ over $K'$ has split-multiplicative reduction at a place $v' \mid v$ of $K'$. Assume furthermore that the degree of $K'/K$ is minimal with this property and denote this degree by $k$. Following the proofs of Prop. VII 5.4 c) and Cor. A.1.4 a) in \cite{Si86} we find that $k\leq 48$.

Let $P$ be a point in $M_{e}^{E} (v)$ and $w' \mid v'$ an extension of $v'$ to $K'(P)$. Denote the restriction of $w'$ to $K(P)$ with $w$. Then by assumption and Lemma \ref{ramification} we have $e\geq e_{w\mid v} \geq e_{w' \mid v'}$.
Thus, we can apply Theorem \ref{ellipticversion} with $d$ replaced by $kd$. \end{proof}

We have proven a lower bound for the canonical heights $\widehat{h}_E$ and $\widehat{h}_f$ on sets. Our main interest concerns lower bounds on fields. For a field $L$ lying inside $M_e$ for some $e\in \mathbb{N}$, Theorem \ref{ellipticversion} and Theorem \ref{lattesversion} give us lower bounds for the canonical heights on $E (L)\setminus E_{\rm tor}$ and $L\setminus \PrePer(f)$. But we can achieve much nicer bounds if we additionally assume that $L / K$ is normal. In this case the term $e!$ in our bound can be replaced by $e$. Formally:

\begin{corollary}\label{galoiscase}
Let $E$ be an elliptic curve over $K$ with split-multiplicative reduction at $v\mid p$ and let $\mathfrak{c}$ be as in Theorem \ref{ellipticversion}. Let further $L / K$ be a Galois extension with $L\subset M_e (v)$, for a fixed $e\in\mathbb{N}$. Then we have
 \begin{itemize}
 \item[i)] $\widehat{h}_E (P) \geq \frac{\frac{\log{p}}{2d}\mathfrak{c} - 3\log{2}}{(8\mathfrak{c}^3 - 2\mathfrak{c})(e\ord_{v}(j^{-1}))^2}>0 \text{ for all } P\in E (L) \setminus E_{\rm tor}$
 \item[ii)] $\vert E_{\rm tor}(L) \vert < (\mathfrak{c} \ord_v (j^{-1}) e)^2 .$
 \end{itemize}
\end{corollary}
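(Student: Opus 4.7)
The plan is to rerun the argument of Theorem~\ref{ellipticversion} with the multiplier $e!$ replaced by the smaller integer $e'$, where $e'$ is the common ramification index at the places of $L$ above $v$. This replacement is available precisely because $L/K$ is Galois, and it is essentially the only change needed.

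First I would note that since $L/K$ is Galois, $\mathrm{Gal}(L/K)$ acts transitively on the set of places of $L$ above $v$, so all such places share a single ramification index, which I call $e'$, and by hypothesis $e' \leq e$. Next, for any $P \in E(L)$ and any $w\mid v$ in $M_{K(P)}$, I would pick an extension $w'\mid w$ to $L$ and invoke multiplicativity of ramification in the tower $K \subseteq K(P) \subseteq L$ to conclude that $e_{w\mid v}$ divides $e_{w'\mid v} = e'$. Therefore $e'\ord_v(j^{-1})$ is a multiple of the index $\ord_w(j^{-1}) = e_{w\mid v}\ord_v(j^{-1})$ of $E_0(K(P)_w)$ in $E(K(P)_w)$, and the point $Q := [e'\ord_v(j^{-1})]P$ lies in $E_0(K(P)_w)$ for every $w\mid v$ in $M_{K(P)}$.

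With this uniform multiplier, the remainder of the proof of Theorem~\ref{ellipticversion} goes through verbatim: the inequalities \eqref{ueberv}, \eqref{archimedisch}, \eqref{nichtarchimedisch} and the resulting bound \eqref{hoehe1} are unaffected, and dividing by $(e'\ord_v(j^{-1}))^2$ in place of $(e!\ord_v(j^{-1}))^2$ together with $e' \leq e$ yields part~i).

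For part ii), the same argument shows that every torsion point $P \in E(L)$ has order strictly less than $e'\ord_v(j^{-1})\mathfrak{c} \leq e\,\ord_v(j^{-1})\mathfrak{c}$. In sharp contrast to the situation in Theorem~\ref{ellipticversion}, where $M_{e}^{E}(v)$ is merely a set, here $E_{\mathrm{tor}}(L)$ is a \emph{group}: letting $M$ denote its exponent and choosing $P \in E_{\mathrm{tor}}(L)$ of order $M$, I get $M < e\,\ord_v(j^{-1})\mathfrak{c}$. Since $E_{\mathrm{tor}}(L)$ is annihilated by $M$, it embeds into $E[M]$, and so $|E_{\mathrm{tor}}(L)| \leq M^2 < (e\,\ord_v(j^{-1})\mathfrak{c})^2$. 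I expect the only step requiring any care is the divisibility $e_{w\mid v}\mid e'$, which uses the Galois hypothesis essentially and is exactly what allows the improvement from $e!$ to $e$ and from a cubic bound to a quadratic one.
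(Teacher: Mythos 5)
Your proposal is correct and follows essentially the same route as the paper: the Galois hypothesis is used to replace the lcm bound $e!$ by a single ramification index $\leq e$ in the definition of $Q$, and the count in ii) uses that $E_{\rm tor}(L)$ is a group whose exponent $M$ is realized by an element, so $E_{\rm tor}(L)\subseteq E[M]$. The only cosmetic difference is that the paper passes to the (finite) normal closure of $K(P)$ inside $L$ and runs the height computation over that field, whereas you stay over $K(P)$ and deduce the divisibility $e_{w\mid v}\mid e'$ from the tower $K\subseteq K(P)\subseteq L$ -- which is fine, provided one interprets ramification indices of the possibly infinite extension $L/K$ in the usual way (they are finite here since $L\subset M_e(v)$).
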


\begin{proof} In the proof of Theorem \ref{ellipticversion}, $e!$ was an upper bound for the lowest common multiple of all $e_{w\mid v}$, $w\in M_{K(P)}$, which does not depend on $P$. Now let $K'(P)$ be the normal closure of $K(P)$. From our assumption we know that $K'(P)$ is contained in $L \subset M_e (v)$. Thus, we have $e_{w'\mid v}=e_v \leq e$ for all $w' \in M_{K'(P)}$ lying above $v$. Exactly as in the proof of Theorem \ref{ellipticversion} we achieve that there is no torsion point in $E(L)$ of order greater than $\mathfrak{c} \ord_v (j^{-1}) e$. Let $P$ be a torsion point in $E(L)$ of maximal order $k\leq \mathfrak{c} \ord_v (j^{-1}) e$. We claim that all torsion points in $E(L)$ lie in $E[k]$. Assume this is not the case, then there exists a torsion point $P'\in E(L)$ of order not dividing $k$. The order of the point $P + P' \in E(L)$ is exactly the smallest common multiple of the orders of $P$ and $P'$, and hence it is greater than $k$. This is a contradiction to the maximality of $k$, proving the claim. We can conclude that there are less than $k^2 \leq (\mathfrak{c} \ord_v (j^{-1}) e)^2$ torsion points in $E(L)$.

Statement $i)$ follows exactly as in the proof of Theorem \ref{ellipticversion}.  
\end{proof}

\begin{Remark}
Obviously Corollary \ref{galoiscase} similarly holds for places $v$ where $E$ has potential multiplicative reduction. Moreover, both corollaries have a dynamical analogue for Latt\`es maps associated to $E$. This can be seen by combining the respective proof with the proof of Theorem \ref{lattesversion}. 
\end{Remark}

In the case where $E$ has additive reduction at $v$ and $e=1$ we can also use our computation from Theorem \ref{ellipticversion}. The following proposition is actually a remark of Joseph Silverman in an email to the author.

\begin{Proposition}\label{unramified}
Let $v\mid p$ be a finite place of $K$ where $E$ has additive reduction. With $\mathfrak{c}$ as in Theorem \ref{ellipticversion}, we have
\begin{itemize}
 \item[i)] $\widehat{h}_E (P) \geq \frac{\frac{\log{p}}{2d}(\mathfrak{c}+2) - 3\log{2}}{(8(\mathfrak{c}+2)^3 - 2(\mathfrak{c}+2))144}>0 \text{ for all } P\in M_{1}^{E} (v)\setminus E_{\rm tor}$
 \item[ii)] $\vert E_{\rm tor} \cap M_{1}^{E}(v) \vert < (12\mathfrak{c}+24)^2.$
 \end{itemize}
\end{Proposition}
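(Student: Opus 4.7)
The plan is to replay the proof of Theorem~\ref{ellipticversion} verbatim, replacing the multiplier $e!\ord_v(j^{-1})$ by $12$ and the local height bound at $w\mid v$ coming from split-multiplicative reduction by one coming from additive reduction. Because $P\in M_1^E(v)$ is unramified at $v$, every $w\mid v$ in $M_{K(P)}$ satisfies $e_{w\mid v}=1$, so $K(P)_w/K_v$ is unramified, the Kodaira type is preserved, and in particular the component group of the special fibre of the N\'eron model has order at most $4$. Since $\operatorname{lcm}\{1,2,3,4\}=12$, the point $Q:=12P$ lies in $E_0(K(P)_w)$ for every such $w$; the square $144=12^2$ in the denominator of the statement will come from this multiplier via the identity $\widehat{h}_E(Q)=144\,\widehat{h}_E(P)$.

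For the local height at $w\mid v$ the standard formula at a minimal Weierstrass model gives $\lambda_w(Q)\geq\tfrac{1}{12}w(\Delta_{\min,w})$ for $Q\in E_0(K(P)_w)$, and additive reduction forces $\ord_w(\Delta_{\min})\geq 1$ (in fact $\geq 2$, but $\geq 1$ suffices). The archimedean and non-archimedean $w\nmid v$ contributions \eqref{archimedisch} and \eqref{nichtarchimedisch} are used unchanged. Following the same computation as in the proof of Theorem~\ref{ellipticversion} then produces a lower bound of the shape
$$\sum_{\substack{R,R'\in\Lambda_s\\ R\neq R'}}\widehat{h}_E(R-R')\;\geq\;\frac{(s^2-s)\log p}{12d}-\left(\frac{h(j)}{12}+\frac{16}{5}\right)s-\frac{s\log s}{2},$$
which differs from \eqref{hoehe1} only by the extra $-\,s\log p/(12d)$ coming from the factor $(s^2-s)$ (instead of $s^2$) in the leading positive term. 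This correction can be absorbed into the linear term by replacing $h(j)$ by $h(j)+\log p/d$ throughout; doing so inflates the integer $\mathfrak{c}$ of Theorem~\ref{ellipticversion} by at most $\lceil 10/6\rceil=2$, which is exactly the shift to $\mathfrak{c}+2$ appearing in the statement. The choice $s=2(\mathfrak{c}+2)$, combined with \eqref{hoehe2} and with $\widehat{h}_E(Q)=144\,\widehat{h}_E(P)$, and the Lambert-$W$ estimate of Lemma~\ref{lambert} then yield part~(i) exactly as in Theorem~\ref{ellipticversion}. For~(ii), a torsion point $P$ would make the left-hand side vanish, forcing the order of $Q=12P$ to be strictly less than $\mathfrak{c}+2$; hence the order of $P$ is strictly less than $12(\mathfrak{c}+2)$, and the bound $|E[k]|=k^2$ gives $|E_{\rm tor}\cap M_1^E(v)|<(12(\mathfrak{c}+2))^2=(12\mathfrak{c}+24)^2$.

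The main bookkeeping obstacle will be to verify that the "$+2$" shift really does absorb the extra $-\,s\log p/(12d)$ correction under the Lambert-$W$ estimate, i.e.\ to check that the definition of $\mathfrak{c}$ with $h(j)$ replaced by $h(j)+\log p/d$ is still bounded above by $\mathfrak{c}+2$. The Kodaira-type input (component group of order $\leq 4$ for additive reduction) is standard but essential for the choice of multiplier $12$, and the fact that $E$ has the same Kodaira type at every $w\mid v$ in $M_{K(P)}$ hinges crucially on the unramifiedness of $K(P)_w/K_v$, i.e.\ on the assumption $e=1$.
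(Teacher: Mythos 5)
Your argument for part (i) is essentially the paper's own proof: the multiplier $12=\operatorname{lcm}(1,2,3,4)$ coming from $[E(K(P)_w):E_0(K(P)_w)]\leq 4$ under additive reduction (which persists because $e_{w\mid v}=1$), the bound $\lambda_w(Q)\geq\frac{1}{12}w(\Delta)>0$ at $w\mid v$, the resulting extra linear term $\frac{\log p}{12d}s$, and its absorption via Lemma \ref{lambert}. Your bookkeeping for the shift is sound: the paper bounds the new root by $\mathfrak{c}+\frac{8}{5}<\mathfrak{c}+2$, which is the same as your ``replace $h(j)$ by $h(j)+\frac{\log p}{d}$ and inflate $\mathfrak{c}$ by at most $2$''; with $s=2(\mathfrak{c}+2)$ and $\widehat{h}_E(Q)=144\,\widehat{h}_E(P)$ this gives exactly the stated bound.

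Part (ii), however, has a genuine gap as written. From the contradiction argument you only get that every torsion point $P\in M_1^E(v)$ has order less than $12(\mathfrak{c}+2)$; this does \emph{not} yield the quadratic bound via ``$|E[k]|=k^2$'', because a priori these torsion points are spread over the union $\bigcup_{k<12\mathfrak{c}+24}E[k]$, whose cardinality is cubic in the order bound --- this is precisely why Theorem \ref{ellipticversion}(ii) only gives a cubic count for general $M_e^E(v)$, which is not a group. The missing ingredient, which the paper supplies, is that $M_1^E(v)=E(K^{nr,v})$ is an abelian group, so its torsion points form a subgroup of $E_{\rm tor}$; taking a torsion point of maximal order $k<12\mathfrak{c}+24$ and arguing as in Corollary \ref{galoiscase}, every torsion point of $E(K^{nr,v})$ has order dividing $k$ and hence lies in the single kernel $E[k]$, giving at most $k^2<(12\mathfrak{c}+24)^2$ points. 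Once you insert this group-structure step, your proof of (ii) closes and coincides with the paper's.
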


We will prove below that $\mathfrak{c}\geq -1$ in the setting of Proposition \ref{unramified} (which is not quite obvious from the definition).

\begin{proof} The proof is almost the same as in the case of split-multiplicative reduction. Let $P$ be in $M_{1}^{E} (v)$ and $w\mid v$ a place of $K(P)$. By assumption $v$ is unramified in the extension $K(P)/K$. Hence, $E$ has still additive reduction at $w$ over $K(P)$. Fix a minimal Weierstrass equation of $E$ over $K(P)_w$, and denote the discriminant of $E$ by $\Delta$. Then $E_0 (K(P)_w)$ is a subgroup of $E(K(P)_w)$ of order at most $4$ (see \cite{Si99}, Corollary IV 9.2). Thus, we have $Q:= 12P \in E_0 (K(P)_w)$ for all $w\mid v$ on $K(P)$. The explicit formula for the local height $\lambda_w$ on $E_0 (K(P)_w)$ gives us
$$\lambda_w (Q) \geq \frac{1}{12}w(\Delta) \geq \frac{1}{12e_{v\mid p}}\log{p} > 0 ~\text{ for all } w\mid v \text{ on } K(P) \quad .$$
This follows from the fact that $w(\Delta)=0$ if and only if $E$ has good reduction at $w$ (\cite{Si86}, Proposition VII.5.1), which is not the case as we have noticed above.
Define the set $\Lambda_s := \{iQ \vert i \in \mathbb{N}, i \leq s\}$ for all $s \in \mathbb{N}$ such that $\Lambda_s$ consists of exactly $s$ points. As before we get the lower bound
$$\sum_{\substack{R,R' \in \Lambda_s \\ R\neq R'}} \widehat{h}_E (R-R') \geq \frac{\log{p}}{12d}s^2 - \left(\frac{1}{12} h(j) + \frac{\log{p}}{12d}+\frac{16}{5}\right) s - \frac{1}{2}s\log{s} \quad .$$
Using Lemma \ref{lambert} and the definition of $\mathfrak{c}$ we find that the right hand side is greater than $0$ for $s$ greater than $\mathfrak{c}+\frac{8}{5}< \mathfrak{c}+2$. Lemma \ref{lambert} also tells us $\mathfrak{c}+\frac{8}{5} > \frac{5}{8}$ which implies $\mathfrak{c}+2\geq 1$. Thus, there cannot exist a torsion point of order greater than $12\mathfrak{c}+24$. Notice that $M_{1}^{E}(v)=E(K^{nr,v})$, and therefore $M_{1}^{E}(v)$ is an abelian group. As in the proof of Corolarry \ref{galoiscase}, we conclude that all torsion points in $M_{1}^{E}$ lie in the set $E[k]$, for a $k\leq 12\mathfrak{c}+24$. This implies part $ii)$.
If $P$ is not a torsion point we conclude
$$\widehat{h}_E (P) \geq \frac{\frac{\log{p}}{2d}(\mathfrak{c}+2) - 3\log{2}}{(8(\mathfrak{c}+2)^3 - 2(\mathfrak{c}+2))144}>0 \quad .$$
This again follows exactly as in the proof of Theorem \ref{ellipticversion}.
\end{proof}

Our results show that $K^{nr,v}$ has the Bogomolov property relative to $\widehat{h}_E$ if $E$ has bad reduction at $v$. In order to construct other examples of fields lying inside some $M_e (v)$ the following Lemma might be helpful.
 
\begin{Lemma}\label{compositum}
Let $L/K$ and $M/K$ be field extensions such that $L \subset M_{e} (v)$ and $M\subset M_{e'}(v)$ for a non-archimedean absolute value $v$ on $K$. Then $LM \subset M_{ee'}(v)$.
\end{Lemma}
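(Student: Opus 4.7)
The plan is to verify the inequality $e_{w \mid v}\leq ee'$ directly for each algebraic $\alpha \in LM$ and each place $w$ of $K(\alpha)$ above $v$. Since $\alpha$ lies in some compositum of finite subextensions, I would first pick finite subextensions $L_0\subseteq L$ and $M_0 \subseteq M$ of $K$ with $\alpha \in L_0 M_0$. This reduces the problem to a finite situation where Lemma \ref{ramification} is applicable. Extend $w$ to a place $W$ of $L_0 M_0$ and set $w_L := W\vert_{L_0}$ and $w_M := W\vert_{M_0}$. By the hypotheses $L \subset M_e(v)$ and $M \subset M_{e'}(v)$ we have $e_{w_L \mid v}\leq e$ and $e_{w_M \mid v}\leq e'$.

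\textbf{Key step.} Next, apply Lemma \ref{ramification} with the finite extension taken to be $L_0/K$ and the arbitrary extension taken to be $M_0/K$, so that the compositum is $L_0 M_0$ and the chosen place upstairs is $W$. The residue field $k(v)$ is finite (hence perfect), so the lemma gives
\[
e_{W \mid w_M} \;\leq\; e_{w_L \mid v} \;\leq\; e .
\]
Multiplicativity of ramification indices in the tower $K_v \subset (M_0)_{w_M} \subset (L_0 M_0)_W$ then yields
\[
e_{W \mid v} \;=\; e_{W \mid w_M}\cdot e_{w_M \mid v} \;\leq\; e\cdot e' .
\]
Since $K(\alpha)\subseteq L_0 M_0$ and $w = W\vert_{K(\alpha)}$, multiplicativity in the tower $K_v \subset K(\alpha)_w \subset (L_0 M_0)_W$ gives $e_{w \mid v}\leq e_{W \mid v}\leq ee'$. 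As $\alpha$ and $w$ were arbitrary, this establishes $LM \subseteq M_{ee'}(v)$.

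\textbf{Main obstacle.} There is no substantial difficulty; the only point requiring care is that Lemma \ref{ramification} demands that one of the two extensions be finite, which forces the preliminary reduction from $L$ and $M$ to finite subextensions $L_0$ and $M_0$ containing the chosen $\alpha$. Everything else is bookkeeping with restrictions of places and the standard multiplicativity of ramification indices in a tower.
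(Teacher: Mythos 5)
Your proof is correct and follows essentially the same route as the paper: reduce to finite subextensions so that Lemma \ref{ramification} applies, bound the ramification of the compositum over the $M$-side by $e$, and finish with multiplicativity of ramification indices in a tower. If anything, your reduction of \emph{both} $L$ and $M$ to finite subextensions $L_0$, $M_0$ is slightly more careful than the paper, which only passes to a finite subfield $F\subset M$ and applies Lemma \ref{ramification} with the possibly infinite field $L$ in the role of the finite extension.
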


\begin{proof} Let $F\subset M$ be any subfield with $[F:K]\leq \infty$. Moreover we choose any $w' \in M_{LM}$, with $w'\mid v$, and define $w=w'\vert_{L}$ and $v'=w'\vert_F$. With Lemma \ref{ramification} we conclude $e_{w'\mid v'}\leq e_{w\mid v} \leq e$. This leads us to $e_{w'\mid v}=e_{w'\mid v'}e_{v'\mid v}\leq e e'$. The fact that for every $\alpha\in LM$ there exists such a finite extension $F/K$ with $\alpha \in LF$ concludes the proof. 
\end{proof}

\begin{Proposition}\label{goodred}
If $E$ has good reduction at $v\mid p$ and $f$ is an associated Latt\`es map, then neither of the statements in Theorem \ref{ellipticversion} and \ref{lattesversion} is true.
\end{Proposition}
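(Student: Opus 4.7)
The plan is to exhibit, inside the smallest set $M_1^E(v) = E(K^{nr,v})$ --- which sits inside every $M_e^E(v)$ --- both an infinite family of torsion points and a sequence of non-torsion points whose N\'eron-Tate height tends to $0$. The dynamical counterparts then follow at once from Lemma \ref{heightrelation}: $\pi$ maps $E(K^{nr,v})$ to $\mathbb{P}^1(K^{nr,v}) \subset M_1(v) \cup \{\infty\}$, sends torsion to preperiodic and non-torsion to non-preperiodic points, and multiplies the height by $\deg(\pi)$.

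For the torsion/preperiodic counts I invoke the N\'eron-Ogg-Shafarevich criterion. Good reduction at $v$ forces every inertia subgroup above $v$ to act trivially on $E[n]$ for each $n$ coprime to $p$; hence $E[n] \subset E(K^{nr,v})$ for every such $n$, yielding infinitely many torsion points in $M_1^E(v)$. Their images under $\pi$ give infinitely many preperiodic points of $f$ in $M_1(v)$, refuting assertion (ii) of both Theorem \ref{ellipticversion} and Theorem \ref{lattesversion}.

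For the Bogomolov statements, Lemma \ref{nonempty} furnishes a non-torsion $P_0 \in E(K^{nr,v})$. The heart of the argument is the divisibility claim: for every integer $N$ coprime to $p$, the map $[N] \colon E(K^{nr,v}) \to E(K^{nr,v})$ is surjective. Granting this, choose $Q_N \in E(K^{nr,v})$ with $[N]Q_N = P_0$; then $Q_N$ is non-torsion and satisfies $\widehat{h}_E(Q_N) = N^{-2}\widehat{h}_E(P_0) \to 0$ as $N \to \infty$ through integers coprime to $p$. The image $\alpha_N := \pi(Q_N)$ lies in $M_1(v) \setminus \PrePer(f)$ with $\widehat{h}_f(\alpha_N) = \deg(\pi)\widehat{h}_E(Q_N) \to 0$, falsifying assertion (i) of both theorems.

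Proving the divisibility claim is the main obstacle. I would pass to the completion $K_v^{nr}$ and use the short exact sequence $0 \to \hat{E}(\mathfrak{m}) \to E(K_v^{nr}) \to \tilde{E}(\overline{k(v)}) \to 0$ associated to good reduction. For $\gcd(N,p) = 1$, the map $[N]$ is an isomorphism on the formal group $\hat{E}(\mathfrak{m})$ (since $N$ is a unit in the residue field), and it is surjective on $\tilde{E}(\overline{k(v)})$ (any non-zero isogeny of elliptic curves over an algebraically closed field is surjective on points), so the snake lemma gives surjectivity of $[N]$ on $E(K_v^{nr})$. Any $N$-th division point of $P_0$ is algebraic over $K$ and lies in $E(K_v^{nr})$, hence in $E(\overline{K}) \cap E(K_v^{nr})$. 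Running the same analysis at every place of $K(Q_N)$ above $v$ shows that $K(Q_N)/K$ is unramified at every such place, so $Q_N \in E(K^{nr,v})$. The only genuine subtlety is this final place-by-place check, made necessary because $K^{nr,v}$ is defined by an unramifiedness condition at \emph{every} place above $v$, not merely at one fixed embedding of $\overline{K}$ into $\overline{K}_v$.
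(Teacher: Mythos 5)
Your overall strategy coincides with the paper's: the N\'eron--Ogg--Shafarevich criterion supplies infinitely many torsion points in $E(K^{nr,v})=M_1^E(v)$, dividing a fixed non-torsion point $P_0\in E(K^{nr,v})$ (Lemma \ref{nonempty}) by integers prime to $p$ produces non-torsion points of arbitrarily small N\'eron--Tate height inside $M_1^E(v)$, and Lemma \ref{heightrelation} together with $\PrePer(f)=\pi(E_{\rm tor})$ transfers both failures to $f$. The only real difference is that where you re-derive the key fact that prime-to-$p$ division points of $P_0$ stay unramified above $v$, the paper simply quotes \cite{Si86}, Proposition VIII.1.5\,b). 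Two points in your re-derivation need patching. First, $K_v^{nr}$ is not complete, so the formal-group statements (convergence of the group law, $[N]$ an isomorphism on $\hat{E}(\mathfrak{m})$) are not literally available over it; either work over the completion of $K_v^{nr}$ and then intersect back with $\overline{K}$, or --- cleaner, and essentially Silverman's own proof --- argue via inertia: for $\sigma$ in an inertia group above $v$ one has $\sigma Q_N-Q_N\in E[N]$, this difference reduces to $O$, and reduction is injective on prime-to-$p$ torsion under good reduction, so $\sigma Q_N=Q_N$. Second, your final place-by-place check, as stated, only yields that at each place $w\mid v$ \emph{some} $N$-division point of $P_0$ lies in an unramified extension; to get this for the fixed $Q_N$ at every $w$ simultaneously you must add that any two division points differ by an element of $E[N]\subset E(K^{nr,v})$ (which you already have from the torsion part), or again use the inertia argument, which treats all places at once. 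With these two repairs your proof is complete and amounts to the paper's argument with the cited proposition unwound.
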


\begin{proof} Lets start with Theorem \ref{ellipticversion}. By the criterion of N\'eron-Ogg-Sha\-fare\-vich all points of order $m$ with $p\nmid m$ are unramified above $v$ (see \cite{Si86}, Theorem VII 7.1). In particular, there are infinitely many torsion points in $E(K^{nr,v})$ and hence in $M_{e}^{E}(v)$ for all $e \in \mathbb{N}$. 

Take an arbitrary point $P_0 \in E(K^{nr,v}) \setminus E_{\rm tor}$ and let $\{P_n\}_{n \in \mathbb{N}}$ be a sequence of points in $E(\overline{K})$, satisfying $[m]P_n=P_{n-1}$. This is possible by Lemma \ref{nonempty}. By \cite{Si86}, Proposition VIII 1.5 b), all $P_n$ are in $E(K^{nr,v}) \setminus E_{\rm tor}$ and we have
$$\widehat{h}_E (P_n)=\frac{1}{m^2} \widehat{h}_E (P_{n-1})= \frac{1}{m^{2n}} \widehat{h}_E (P_{0}) \rightarrow 0 \quad .$$
Thus, there are points of arbitrarily small height in $M_{e}^{E}(v)$ for all $e \in \mathbb{N}$.

Contradictions for the statements in Theorem \ref{lattesversion} follow quickly. If $P$ is in $E(K^{nr,v})$, then $\pi(P)$ is in $K^{nr,v}$. The degree of $\pi$ is finite so the equation $\PrePer(f)=\pi(E_{\rm tor})$ shows that there are infinitely many preperiodic points of $f$ in $K^{nr,v}$. Also with Lemma \ref{heightrelation} and the above we conclude that $\widehat{h}_f$ can get arbitrary small on $K^{nr,v}\setminus \PrePer(f)$. \end{proof}

\begin{Example}\label{counterexample elliptic}\rm
As usual let $K$ be a number field, $E: y^2 = x^3 + Ax + B$ an elliptic curve defined over $K$, and let $f$ be a Latt\`es map associated to $E$ and $\pi(x,y)=x$. The following example shows two things. Firstly, the Bogomolov property relative to $\widehat{h}_E$ is in general not preserved under finite field extensions. Secondly, a field can have the Bogomolov property relative to $\widehat{h}_E$ but not relative to $\widehat{h}_f$.

We will use the theory of twists of an elliptic curve.
Assume that $E$ has additive reduction at a finite $v\in M_K$, and that there is an element $\gamma \in K$ such that the twist $E_\gamma$ has good reduction at $v \in M_K$. (One might choose $K=\mathbb{Q}$, $v=p\geq 3$, $E: y^2 = x^3 + p^2 x$ and $\gamma= p^{-1}$). Let $g_{\gamma^{-1}}: E_{\gamma} \rightarrow E$ be the isomorphism from  \eqref{heighttwist}. Notice that $\sqrt{\gamma}$ cannot be an element in $K^{nr,v}$, since the reduction type of $E$ at $v$ changes if we extend $K$ to $K(\sqrt{\gamma})$.
As $E_{\gamma}$ has good reduction at $v$, Proposition \ref{goodred} yields a sequence $\{ P_i \}_{i \in \mathbb{N}}$ in $E_{\gamma}(K^{nr,v})$ such that $\widehat{h}_{E_{\gamma}} (P_i) \rightarrow 0$, as $i \rightarrow \infty$. The elements $g_{\gamma^{-1}}(P_i)$ all lie in $E(K^{nr,v}(\sqrt{\gamma}))$, and from the definition of $g_{\gamma^{-1}}$ we know that $\alpha_i :=\pi(g_{\gamma^{-1}}(P_i ))$ is in $K^{nr,v}$. Using Lemma \ref{heightrelation} and \eqref{heighttwist} we get
$$\widehat{h}_f (\underbrace{\alpha_i}_{\in K^{nr,v}}) = 2 \widehat{h}_E (\underbrace{g_{\gamma^{-1}}(P_i)}_{\in K^{nr,v}(\sqrt{\gamma})}) = 2 \widehat{h}_{E_{\gamma}}(P_i) \rightarrow 0 \quad ,$$
as $i \rightarrow \infty$. But $K^{nr,v}$ has the Bogomolov property relative to $\widehat{h}_{E}$, as $E$ has bad reduction at $v$ (see Proposition \ref{unramified}).
\end{Example}

THE FOLLOWING IS NOT EQUIVALENT TO THE CONTENT OF THE PUBLISHED VERSION!

The examples from above are the only known examples, where the Bogomolov property with respect to $\widehat{h}_E$ is not preserved under finite field extensions. In contrast to the statement given in the published article, I am  not able to classify all these field extensions. This is due the fact that the claimed Lemma 5.8 in the previous version of this article is not true - a counterexample was found by Francesco Amoroso and Lea Terracini.

 All I can say is the trivial consequence of the preceding results:

\begin{Proposition}\label{thm:finex}
 Let $E$ be an elliptic curve defined over the number field $K$ with non-archimedean absolute value $v$, and let $L/K$ be a finite Galois extension. Then $L\cdot K^{nr,v}$ has the Bogomolov property relative to $\widehat{h}_E$ if  there is a $w\in M_L$, $w\mid v$, such that $E/L$ has bad reduction at $w$.
\end{Proposition}

\end{section}

\renewcommand{\thefootnote}{}

\footnote{Lukas Pottmeyer, ...}

\end{document}